\tikzset{ampersand replacement=\&}
\newcommand{\mack}{\mathcal{M}\!\textit{ack}} 
\newcommand{\QXmod}[1]{\QQ[#1]\text{-mod}} 
\newcommand{\Sp}{\mathcal{S}\!\textit{p}} 
\newcommand{\set}{\mathcal{S}\!\textit{et}} 
\newcommand{\Orb}{\textrm{Orb}}  
\newcommand{\Orbx}{\textrm{Orb}^{\times}} 
\newcommand{\Comm}{\textrm{Comm}}
\newcommand{\A}{\mathcal{A}}
\newcommand{\CommAG}{\Comm\,\A(G)}
\newcommand{\Ho}{\textrm{Ho}}  
\newcommand{\gr}{\textrm{gr}}   
\newcommand{\Ch}{\mathsf{Ch}} 
\newcommand{\GSp}{G\textrm{-} \mathcal{S}\!\textit{p} _{\mathbb{Q}}}  
\newcommand{\QXcdga}[1]{\QQ[#1]\text{-CDGA}}   
\newcommand{\naive}{\textrm{Naive}}
\newcommand{\CDGA}{\text{CDGA}}
\newcommand{\QQ}{\mathbb{Q}} 
\newcommand{\CC}{\mathbb{C}} 
\newcommand{\ZZ}{\mathbb{Z}} 
\newcommand{\KU}{\textit{KU}} 
\newcommand{\ku}{\textit{ku}} 
\newcommand{\RU}{\textit{RU}} 
\newcommand{\burnQ}{\underline{A}_{\QQ}} 
\newcommand{\constQ}{\underline{\QQ}} 
\newcommand{\repQ}{\underline{\RU}_{\QQ}} 
\newcommand{\linepi}{\underline{\pi}} 
\newcommand{\s}{\mathbb{S}_{\mathbb{Q}}} 
\newcommand{\HQ} {H\constQ} 
\newcommand{\res}[2]{\text{res}_{#1}^{#2}} 
\newcommand{\tr}[2]{\text{tr}_{#1}^{#2}} 
\newcommand{\firstq}{\varphi}
\newcommand{\secondq}{\psi}
\newcommand{\sma}{\wedge}
\DeclareMathOperator{\im}{Im} 
\newcommand{\abs}[1]{\lvert{#1}\rvert}
\newcommand{\inv}{{-1}}
\DeclareMathOperator{\Galgp}{Gal} 
\newcommand{\xto}{\xrightarrow} 
\newtheorem{thm}{Theorem}[section] 
\newtheorem{cor}{Corollary}[section]
\let\c@cor\c@thm\makeatother
\newtheorem{lemma}{Lemma}[section]
\let\c@lemma\c@thm\makeatother
\let\c@prop\c@thm\makeatother
\let\c@claim\c@thm\makeatother
\newtheorem{thrm}{Theorem}[section]
\let\c@thrm\c@thm\makeatother
\newtheorem*{unnumberedtheorem}{Theorem}  
\theoremstyle{definition}
\newtheorem{defn}{Definition}[section]
\let\c@defn\c@thm\makeatother
\newtheorem{strategy}{Strategy}[section]
\let\c@strategy\c@thm\makeatother
\theoremstyle{remark}
\newtheorem{rem}{Remark}[section]
\let\c@rem\c@thm\makeatother
\newtheorem{ex}{Example}[section]
\let\c@ex\c@thm\makeatother
\let\c@equation\c@thm
\numberwithin{equation}{section}
\title[Naive-commutative structure on rational $K$-theory]{Naive-commutative structure on rational equivariant $K$-theory for abelian groups}
\author[Bohmann]{Anna Marie Bohmann}
\address[Bohmann]{Vanderbilt University}
\email{am.bohmann@vanderbilt.edu}
\author[Hazel]{Christy Hazel}
\address[Hazel]{University of Oregon}
\email{chazel@uoregon.edu}
\author[Ishak]{Jocelyne Ishak}
\address[Ishak]{Vanderbilt University}
\email{jocelyne.ishak@vanderbilt.edu}
\author[K\k{e}dziorek]{Magdalena K\k{e}dziorek}
\address[K\k{e}dziorek]{Radboud University Nijmegen}
\email{m.kedziorek@math.ru.nl}
\author[May]{Clover May}
\address[May]{University of California Los Angeles}
\email{clovermay@math.ucla.edu}
\begin{document}

\begin{abstract}In this paper, we calculate the image of the connective and periodic rational equivariant complex $K$-theory spectrum in the algebraic model for naive-commutative ring $G$-spectra given by Barnes, Greenlees and K\k{e}dziorek for finite abelian $G$.  Our calculations show that these spectra are unique as naive-commutative ring spectra in the sense that they are determined up to weak equivalence by their homotopy groups. We further deduce a structure theorem for module spectra over rational equivariant complex $K$-theory.
\end{abstract}

\maketitle

\section{Introduction}

Modeling rational spectra via algebraic data has a long and fruitful history in homotopy theory.  Serre's original calculations of stable homotopy groups of spheres \cite{Serre1951} imply that the rational homotopy category $\Ho(\Sp_\QQ)$ is equivalent to the category of graded rational vector spaces.  An analogous equivalence was later obtained at the level of derived categories by Robinson \cite{Robinson} and at the level of model categories by Shipley \cite{ShipleyHZ} as a zig-zag of symmetric monoidal Quillen equivalences
\[\Sp_{\QQ} \simeq_Q \Ch (\QQ\textrm{-mod}).\]
Work of Richter and Shipley \cite{RichterShipley} further shows that there is a zig-zag of Quillen equivalences between rational commutative ring spectra and commutative differential graded algebras over $\QQ$.  Hence rational CDGAs are an \emph{algebraic model} for the rational commutative ring spectra.

It is something of a truism in algebraic topology that ``algebra is easy,'' in the sense that once one can reduce a topological question to a matter of algebra, the remaining algebraic computations should be straightforward.  Like most truisms, this one is mostly false: algebraic computations come equipped with a plethora of subtleties.  Moreover, the abstract knowledge that one can reduce a problem to algebra is often quite separate from the explicit reduction in a given case. In particular, for a concrete rational commutative ring spectrum $X$, it may be nontrivial to find the explicit rational CDGA corresponding to $X$ under the Richter--Shipley zig-zag of Quillen equivalences.   Nevertheless, algebraic models of homotopy theory---and more general algebraicizations of topological questions---are of great utility in both structural and computational understanding of homotopy theory.

In this paper, we focus on specific, concrete computations in algebraic models for rational $G$-equivariant spectra over a finite group $G$.  That is, our main goal is to find explicit models for rational $G$-spectra in the algebraic categories modeling these spectra.  The main spectra of interest are commutative ring spectra.

 For any finite group $G$, there is a model for the homotopy category of rational $G$-spectra given by work of Greenlees and May \cite{GM_Tate}. What we call an \emph{algebraic model} in this paper is not the model for the homotopy category of rational $G$-spectra, but an algebraic model category that is Quillen equivalent to the category of spectra in question.  In the case of $G$-spectra, \cite{KedziorekExceptional} uses Greenlees and May's result to produce an algebraic model category $\A(G)_\QQ$ that is Quillen equivalent to the stable model category of rational orthogonal $G$-spectra.

In the nonequivariant case, Richter and Shipley's result says that commutative algebra objects in the algebraic model for rational spectra are a model for rational commutative algebra spectra.  In the equivariant case,  the story is more intricate.  There is a hierarchy of types of ``equivariant commutativity'' \cite{BlumbergHill}, and commutative algebra objects in $\A(G)_\QQ$ only model the \emph{lowest} level of this commutativity, which is sometimes referred to as ``naive commutative''   \cite{BarnesGreenleesKedziorek}.
We denote this algebraic model for rational naive-commutative ring $G$-spectra by $\CommAG_\QQ$.  We provide a detailed description of the algebraic models $\A(G)_\QQ$ and $\CommAG_\QQ$ in Section \ref{sec:reviewrationalmodels}.

 Our main theorem is as follows.  It appears later as Theorem \ref{thm:main}.
\begin{unnumberedtheorem}
Let $G$ be a finite abelian group.  The image of $\KU_\QQ^G$ in the algebraic model
 $\CommAG_\QQ$ is given by $(V_H)_{(H)\leq G}$ where
\begin{itemize}
\item $V_H=0$ if $H$ is not cyclic and
\item when $H$ is cyclic of order $n$, $V_H\cong\QQ(\zeta_n)[\beta^{\pm 1}]$ where $\QQ(\zeta_n)$ is the field extension of $\QQ$ by a primitive $n$-th root of unity $\zeta_n$ and $\beta$ is in degree $2$.
\end{itemize}
\end{unnumberedtheorem}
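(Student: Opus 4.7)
The plan is to compute the image of $\KU^G_\QQ$ in $\CommAG_\QQ$ piece by piece, indexed by conjugacy classes of subgroups of $G$, and then to verify that these pieces assemble into the claimed naive-commutative ring object. For abelian $G$ every subgroup is normal, so each conjugacy class $(H)$ consists of a single subgroup and the Weyl group is simply $G/H$. The first step is to recall from the description of the algebraic model in \cref{sec:reviewrationalmodels} the explicit recipe that extracts $V_H$ from a rational $G$-spectrum $X$: one applies an appropriate geometric-fixed-points-type functor, rationalizes, and localizes so as to discard any contribution genuinely supported on proper subgroups of $H$, leaving a graded $\QQ[G/H]$-module.

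I would then feed $\KU^G_\QQ$ into this recipe, using classical rational character theory as the main computational input. The ring $\RU(G)\otimes\QQ$ decomposes as $\prod_{(g)}\QQ(\zeta_{|g|})$ indexed by conjugacy classes of elements of $G$, and Bott periodicity contributes an invertible class $\beta$ in degree $2$. Under this decomposition the summand corresponding to $g$ is detected precisely on the cyclic subgroup $\langle g\rangle$ and vanishes on every proper subgroup of it. Consequently, after discarding contributions supported on proper subgroups of $H$, the summands that survive are exactly those corresponding to generators of $H$. Since a noncyclic group has no generators, this immediately forces $V_H = 0$ for noncyclic $H$; for $H$ cyclic of order $n$, the $\varphi(n)$ generators of $H$ assemble into a single Galois orbit, producing $\QQ(\zeta_n)$ in degree $0$. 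Combined with Bott periodicity this yields $V_H\cong\QQ(\zeta_n)[\beta^{\pm 1}]$ as a graded ring, with the residual $G/H$-action on characters factoring through $\Galgp(\QQ(\zeta_n)/\QQ)$.

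The main obstacle I anticipate is not the additive identification of the pieces $V_H$, which is essentially a repackaging of classical character theory, but rather the careful propagation of multiplicative structure through the Barnes--Greenlees--K\k{e}dziorek Quillen equivalence. Tracking the product on $\KU^G_\QQ$ through geometric fixed points, rationalization, and the subsequent localization must be carried out compatibly with the naive-commutative structure on the target category, and in particular one must be sure that the commutative multiplication inherited in the model genuinely agrees with the calculated ring structure $\QQ(\zeta_n)[\beta^{\pm 1}]$. A secondary subtlety is checking that the inter-subgroup gluing data in the algebraic model is trivial in the required sense; this should reduce to explicit formulas for restrictions of rational characters from $G$ to subgroups, which in the abelian case are completely controlled by the Galois-theoretic splitting above.
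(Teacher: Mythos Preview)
Your computation of the pieces $V_H$ via rational character theory is essentially the paper's \cref{lem:homology_of_image}, and it is correct as far as it goes. But you have only computed the \emph{homology} of $\theta(\KU_\QQ^G)$, not $\theta(\KU_\QQ^G)$ itself, and this is the genuine gap. The algebraic model $\CommAG_\QQ$ is a category of commutative differential graded algebras, and the zig-zag of Quillen equivalences from \cite{BarnesGreenleesKedziorek} passes through functors (notably those of \cite{ShipleyHZ} and \cite{RichterShipley}) that are not computationally explicit. All one knows a priori is that $H_*(\theta(\KU_\QQ^G)_{(H)})\cong \pi_*(\Phi^H \KU_\QQ^G)$ as graded $\QQ[W_GH]$-algebras; in general a CDGA is not determined up to quasi-isomorphism by its homology ring. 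The paper's actual argument therefore has a second, indispensable half: a formality theorem showing that any $\QQ[W_GH]$-CDGA whose homology is $\QQ(\zeta_n)[\beta^{\pm 1}]$ with trivial $W_GH$-action is quasi-isomorphic to that homology viewed as a CDGA with zero differential. This is done by constructing an explicit free replacement $\QQ[t]\otimes E(z)\otimes \QQ[\gamma,\bar\gamma]\otimes E(y)$ (a Koszul complex for $\Phi_n$ tensored with one for the relation $\gamma\bar\gamma=1$) and an averaging trick to choose $W_GH$-fixed representatives. Without this step your argument identifies only the homotopy type of the underlying spectrum, not its image in $\CommAG_\QQ$.

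Two of your anticipated obstacles are not in fact issues. There is no ``inter-subgroup gluing data'': $\CommAG_\QQ$ is literally a product $\prod_{(H)}\QXcdga{W_GH}$, so the pieces are independent. And the multiplicative structure is carried through the zig-zag abstractly because the Quillen equivalences are symmetric monoidal; what you need is not to track the product by hand but to know that the resulting CDGA is formal. Finally, note that for abelian $G$ the $W_GH$-action on $\QQ(\zeta_n)$ is trivial (conjugation is trivial), which is what makes the formality argument go through; you state that the action factors through the Galois group but do not observe that the map $m_H$ is trivial here.
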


Finding this image of $\KU_\QQ^G$ in the algebraic model is not simply a matter of tracing through the various functors in zig-zag of Quillen equivalences between rational naive-commutative ring $G$-spectra and $\CommAG_\QQ$.  This zig-zag includes functors for which we do not have explicit computational control.  What the zig-zag retains is control over the homology of the image in the algebraic model of a given spectrum $X$; in general this does not suffice to determine the algebraic object itself.  Hence the strategy of proof is to compute the homotopy groups of the geometric fixed points of $\KU_\QQ^G$.  These homotopy groups encode the homology of the algebraic model of $\KU_\QQ^G$ as a naive commutative ring $G$-spectrum.
We then show any commutative differential graded algebra with this homology is formal. The formality result finally determines the image of $\KU_\QQ^G$ in the algebraic model.

Our main result has several consequences. Firstly, it shows that all modules over $\KU_\QQ^G$ are free over the idempotent pieces of $\KU_\QQ^G$. This result is stated as Corollary \ref{cor:modules}.  In fact, our calculations show this holds for both abelian and nonabelian groups.
\begin{unnumberedtheorem}
Let $G$ be a finite group and $X$ be a module spectrum over $\KU_\QQ^G$. Then
\[ X\simeq \bigoplus_{(H)} (e_{(H)}\KU_\QQ^G)^{\oplus i_H} \oplus (\Sigma e_{(H)}\KU_\QQ^G)^{\oplus j_H},
\]
where  $H \leq G$,  and $i_H$ and $j_H$ are nonnegative integers.
\end{unnumberedtheorem}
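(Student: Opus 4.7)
The plan is to work inside the algebraic model $\CommAG_\QQ$, where the image of $\KU_\QQ^G$ has the very simple form provided by the theorem just above. By that theorem, $\KU_\QQ^G$ decomposes across conjugacy classes of subgroups, with $(H)$-component equal to the CDGA $V_H = \QQ(\zeta_n)[\beta^{\pm 1}]$ (with zero differential) when $H$ is cyclic of order $n$, and zero otherwise. The first step is to use the idempotent splitting $\KU_\QQ^G \simeq \bigoplus_{(H)} e_{(H)}\KU_\QQ^G$ to reduce the classification of $\KU_\QQ^G$-modules to a component-wise problem over each $e_{(H)}\KU_\QQ^G$, which in the algebraic model translates to understanding DG-modules over $\QQ(\zeta_n)[\beta^{\pm 1}]$.

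The crucial observation is that $\QQ(\zeta_n)[\beta^{\pm 1}]$ is a \emph{graded field}: $\QQ(\zeta_n)$ is a field and $\beta$ is an invertible element of degree $2$, so every graded module over it is free, and decomposes as a direct sum of copies of $V = \QQ(\zeta_n)[\beta^{\pm 1}]$ and its shift $\Sigma V$ (corresponding to the even and odd parts of the grading). Combined with a formality argument, since the CDGA has trivial differential, any DG-module $M$ is quasi-isomorphic as a DG-module to its homology with zero differential. Applying the graded-field splitting to the homology yields $M \simeq V^{\oplus i_H} \oplus (\Sigma V)^{\oplus j_H}$ in the algebraic model, and translating back through the Quillen equivalence gives the stated decomposition of $X$.

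The principal obstacle is handling the nonabelian case, since the main theorem is stated only for finite abelian $G$. For a general finite group, one needs to verify that at each cyclic conjugacy class $(H)$ the piece $e_{(H)}\KU_\QQ^G$ still has graded-field homology, possibly after incorporating a Galois-type twisting by the Weyl group of $H$; the authors indicate that this follows from the same geometric fixed-point calculations that drive the abelian case. A secondary, more technical issue is to upgrade the graded splitting of $H_*(M)$ into an honest equivalence of DG-modules (and thereby of spectra), but this is essentially automatic once the underlying CDGA has vanishing differential and its homology is a graded field, so that the minimal model of any DG-module is itself.
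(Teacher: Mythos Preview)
Your core insight is correct and matches the paper's: the idempotent pieces $e_{(H)}\KU_\QQ^G$ have homotopy groups forming a graded field $\QQ(\zeta_n)[\beta^{\pm 1}]$, so modules over them are free, and $2$-periodicity reduces to the two summands shown. However, you route this through the CDGA algebraic model unnecessarily, and that detour is exactly what creates the nonabelian obstacle you flag.

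The paper's proof (Corollary~\ref{cor:modules}) appears in Section~\ref{sec:calculations}, \emph{before} any formality is established, and does not use the identification of $\theta(\KU_\QQ^G)$ as a CDGA at all. It uses only: (i) Barnes's idempotent splitting at the level of spectra, so that each $e_{(H)}X$ is a module spectrum over $e_{(H)}\KU_\QQ^G$; (ii) the homotopy computation of Lemma~\ref{lem:homology_of_image}, which holds for \emph{any} finite $G$ (the abelian hypothesis only enters later, in the formality arguments of Section~\ref{sec:formality}); and (iii) the general spectrum-level fact that a module spectrum over a ring spectrum with graded-field homotopy is a wedge of suspensions of that ring spectrum. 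Because this argument never passes through $\CommAG_\QQ$ or relies on knowing $\theta(\KU_\QQ^G)$ up to quasi-isomorphism, the nonabelian case requires no extra work.

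Your approach has two further soft spots. First, the Quillen equivalences the paper establishes are between rational $G$-spectra and $\A(G)_\QQ$, and between naive-commutative ring spectra and $\CommAG_\QQ$; you would need in addition an equivalence between $\KU_\QQ^G$-\emph{module} spectra and DG-modules over $\theta(\KU_\QQ^G)$, which is plausible but not what is on the page. Second, your formality step for DG-modules over $\QQ(\zeta_n)[\beta^{\pm 1}]$ in the nonabelian case must be carried out $W_GH$-equivariantly with a nontrivial Galois action on $\QQ(\zeta_n)$, and you have not actually done this. The paper's direct spectrum-level argument sidesteps both issues.
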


Our formality result also shows that $\KU_\QQ^G$ admits a unique naive-commutative $E_\infty$ structure.  This result is stated as \cref{uniquenaiveEinftystructure}.
\begin{unnumberedtheorem}
Let $G$ be a finite abelian group.  Then $\KU_\QQ^G$ and $\ku_\QQ^G$ admit unique structures as naive commutative $G$-ring spectra, i.e.,~ as naive $E_\infty$-algebras in $G$-spectra.  That is, if $X$ is a rational naive-commutative $G$-ring spectrum whose graded Green functor of homotopy groups is isomorphic to that of $\KU_\QQ^G$ or $\ku_\QQ^G$, then there is a weak equivalence of rational naive-commutative $G$-ring spectra between $X$ and $\KU_\QQ^G$ or $\ku_\QQ^G$, respectively.
\end{unnumberedtheorem}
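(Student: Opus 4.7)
My plan is to reduce the uniqueness question to the algebraic model $\CommAG_\QQ$ and invoke the formality argument already established in the proof of the main theorem. By the Quillen equivalence between rational naive-commutative ring $G$-spectra and $\CommAG_\QQ$, two such spectra are weakly equivalent if and only if their images in the algebraic model are weakly equivalent. It therefore suffices to show that any $X$ satisfying the hypotheses has the same image as $\KU_\QQ^G$ (respectively $\ku_\QQ^G$) in $\CommAG_\QQ$ up to weak equivalence.

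The objects of $\CommAG_\QQ$ have the form of a tuple $(V_H)_{(H)\leq G}$ of CDGAs indexed by conjugacy classes of subgroups, equipped with Weyl group actions. The homology of each component $V_H$ is computed from the homotopy groups of the geometric fixed points $\Phi^H X$, and these are determined by the graded Green functor $\underline{\pi}_* X$. Since by hypothesis $\underline{\pi}_* X \cong \underline{\pi}_*\KU_\QQ^G$, the component CDGAs for $X$ and $\KU_\QQ^G$ have isomorphic homology rings; by the main theorem this homology is $\QQ(\zeta_n)[\beta^{\pm 1}]$ at each cyclic subgroup of order $n$, and zero at noncyclic $H$. The connective case is identical, with $\QQ(\zeta_n)[\beta]$ replacing the Laurent polynomial ring.

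The essential input is the formality result already proven for the main theorem: any CDGA with homology $\QQ(\zeta_n)[\beta^{\pm 1}]$ (respectively $\QQ(\zeta_n)[\beta]$) is quasi-isomorphic to its homology. Applying this component by component yields a zig-zag of quasi-isomorphisms between the images of $X$ and $\KU_\QQ^G$ in $\CommAG_\QQ$, which pulls back under the Quillen equivalence to a weak equivalence of naive-commutative $G$-ring spectra.

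The main subtlety, rather than a genuine new obstacle, is ensuring that the formal quasi-isomorphism in each component is equivariant for the Weyl group action of $G/H$ on $V_H$, which acts on $\QQ(\zeta_n)$ through a Galois representation. This compatibility should follow directly from the explicit form of the formality argument in the proof of the main theorem, since that argument is carried out one subgroup at a time using lifts that respect the ambient Weyl group action. Once this is verified, the uniqueness statement for both $\KU_\QQ^G$ and $\ku_\QQ^G$ follows simultaneously, as the argument does not distinguish between the periodic and connective cases beyond the choice of polynomial versus Laurent polynomial homology.
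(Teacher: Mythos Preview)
Your proposal is correct and follows essentially the same route as the paper's proof of \cref{uniquenaiveEinftystructure}: pass to the algebraic model via the Quillen equivalence, observe that the homology of $\theta(X)$ agrees with that of $\theta(\KU_\QQ^G)$ because it is determined by the Green functor of homotopy groups, and then invoke the formality results to conclude.

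One small comment: the ``subtlety'' you flag about Weyl-group equivariance is in fact no subtlety at all in the abelian case.  Since $G$ is abelian, conjugation is trivial and hence the $W_GH$-action on $\QQ(\zeta_n)$ is trivial (this is \cref{rem:trivAction}); the Galois-type action only appears for nonabelian $G$.  The formality lemmas (\cref{QZetaDegreeZero,RDegreeZero,RBeta}) are stated precisely for $\QQ[W_GH]$-CDGAs whose homology has \emph{trivial} action, and the averaging trick of \cref{averagingtrick} is what makes the constructed quasi-isomorphisms equivariant.  So your worry is already resolved by the hypothesis that $G$ is abelian, and this is exactly why the paper restricts to that case.
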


The computations in this paper also set the stage for an analysis for $\KU_\QQ^G$ as a genuine commutative ring spectrum.  This analysis, which uses recent work of Wimmer \cite{Wimmer}, is the subject of forthcoming work by the authors.

\subsection{Notation}
Throughout the paper we assume that $G$ is a finite group.
We use the notation $\A(G)_\QQ$ for the algebraic model of rational $G$-spectra and $\CommAG_\QQ$ for the algebraic model of rational naive-commutative ring $G$-spectra; see Definition \ref{def:1} and Definition \ref{def:2}, respectively.
We use the notation $\simeq_Q$ to denote a zig-zag of Quillen equivalences between model categories.
If $X$ is a rational naive-commutative ring $G$-spectrum then we denote by $\theta(X)$ its derived image in $\CommAG_\QQ$.

\subsection{Acknowledgments}
We would like to thank Hausdorff Research Institute for Mathematics in Bonn for their hospitality in hosting the Women in Topology III workshop, where much of this research was carried out.   We also extend our thanks to the organizers of the workshop, Julie Bergner, Ang\'elica Osorno, and Sarah Whitehouse, for making the event both possible and productive.  Many thanks are due to Brooke Shipley, who helped shape the initial stages of this research.  Warm thanks also to Dan Dugger and Mike Hill for  many helpful conversations.

In addition to funding for the workshop from the Hausdorff Institute, we are grateful to Foundation Compositio Mathematica and to the National Science Foundation of the United States, both of which provided funding for the workshop. NSF support was via the grants NSF DMS 1901795 and NSF HRD 1500481:\ AWM ADVANCE. Additionally, the fourth author was supported by a NWO Veni grant 639.031.757.  The first author was partially supported by NSF Grant DMS-1710534.

\section{Review of Rational Models}\label{sec:reviewrationalmodels}

In this section, we recall the construction of algebraic models in the world of  rational equivariant stable homotopy theory. We begin by reviewing the story at the level of homotopy groups and homotopy categories, followed by a discussion of the more structured story at the level of algebraic models via Quillen equivalences.

Given any $G$-spectrum $X$ and any integer $n$, the collection of homotopy groups $\{ \pi_n^H(X)\mid H \le G \}$ forms what is called a \emph{Mackey functor}.  The description of Mackey functors we follow is due to Dress \cite{Dress1973}. For an introduction to the theory of Mackey functors we refer the reader to \cite{Webb_guide}, \cite{shulman2014equivariant}, or \cite[\S 3.1]{HHR}.  When $X$ is a rational $G$-spectrum, its Mackey functor of homotopy groups $\linepi_n(X)$ is a rational Mackey functor, meaning for every subgroup $H$ of $G$, $\pi_n^H(X)$ is a rational vector space.  For example, if $X$ is the rational equivariant sphere spectrum $\s$, then the homotopy groups Mackey functor $\linepi_0(\s)$ is the rational Burnside ring Mackey functor $\burnQ$, which is defined by
\[\burnQ(G/H):= A(H) \otimes \QQ,\]
where $A(H)$ is the Burnside ring of $H$, i.e.~the Grothendieck ring of finite $H$-sets.  All higher homotopy groups of $\s$ vanish.

\begin{rem} The Burnside ring Mackey functor $\burnQ$ has more structure than simply that of a Mackey functor.  It is a commutative Green functor, which reflects the fact that $\s$ is a (naive) commutative $G$-spectrum.  In fact, $\burnQ$ has the even richer structure of a Tambara functor, although we will not make use of it in this paper.
\end{rem}

In this rational setting, Greenlees and May \cite{GM_Tate} show that the algebraic structure of Mackey functor homotopy groups determine the homotopy category of spectra in the following sense: they produce an equivalence of categories
\[ \Ho(\GSp) \to \gr(\mack(G)_\QQ)\]
from the homotopy category of rational $G$ spectra to the category of graded rational Mackey functors that is given by taking homotopy groups.  We note here that this functor is not induced by a Quillen equivalence of model categories.  Once in the algebraic setting of Mackey functors, idempotents in the Burnside ring allow a further splitting of rational Mackey functors into families of modules over group rings $\QQ[W_GH]$, where $W_GH$ is the Weyl group of a subgroup $H$ of $G$.   That is, Greenlees and May prove the following theorem.
\begin{thrm}[Greenlees--May \cite{GM_Tate}] \label{GM_appendix}
Idempotent splitting produces an equivalence of categories
\[
	 \gr(\mack_\QQ G ) \to \prod_{(H)\leq G}  \gr(\QXmod{W_GH}).
	\]
where $W_GH=N_GH/H$ is the Weyl group of $H$ as a subgroup of $G$.  There is thus an equivalence of categories
\[ \Ho(\GSp)\to \prod_{(H)\leq G}  \gr(\QXmod{W_GH}).\]
\end{thrm}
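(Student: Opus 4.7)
The plan is to reduce everything to an idempotent decomposition of the rational Burnside ring and then identify each summand by evaluation at the relevant orbit. The second statement will then follow immediately by composing with the Greenlees--May equivalence $\Ho(\GSp) \to \gr(\mack(G)_\QQ)$ already displayed above.

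First, I would recall that the rationalized Burnside ring $A(G)_\QQ$ is a finite-dimensional commutative semisimple $\QQ$-algebra (Artin's theorem gives enough orbits to make the character map injective, and rationally one gets an isomorphism onto a product of copies of $\QQ$). Consequently, $A(G)_\QQ$ decomposes as a product of fields indexed by the conjugacy classes $(H)$ of subgroups of $G$, with corresponding primitive idempotents $e_{(H)} \in A(G)_\QQ$. Because every rational Mackey functor $M$ is canonically a module over $\burnQ$ (and in particular $M(G/K)$ is an $A(G)_\QQ$-module functorially in $K$), multiplication by the idempotents yields a natural splitting
\[
M \;\cong\; \bigoplus_{(H)\le G} e_{(H)} M
\]
in $\mack(G)_\QQ$. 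This already produces a product decomposition of the category $\mack(G)_\QQ$ into the full subcategories of $e_{(H)}$-local Mackey functors.

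The next step, and the main technical step, is to identify the $(H)$-local piece with $\QQ[W_GH]$-modules. I would define the functor in the statement by sending $M$ to the family $\{e_{(H)}M(G/H)\}_{(H)}$, which is naturally a $\QQ[W_GH]$-module because conjugation by $W_GH = N_GH/H$ acts on $M(G/H)$ through the Mackey structure. For the inverse, given a $\QQ[W_GH]$-module $V$, one builds a Mackey functor supported at $(H)$ using the standard ``fixed-point'' formula $K \mapsto V^{(K\cap N_GH)/H}$-type construction (more precisely, the Lindner/Dress recipe for producing a Mackey functor from a coefficient system supported on a single conjugacy class). The key computation is that after inverting the idempotent $e_{(H)}$, all restrictions and transfers not involving subgroups conjugate to $H$ are forced to vanish, so the Mackey functor is determined by its value at $G/H$ with its $W_GH$-action; this is exactly the content of Greenlees--May's analysis in \cite{GM_Tate} and was reworked categorically by Thévenaz--Webb. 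Passing to graded objects is levelwise and requires no extra argument.

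The hard part is the identification of the $(H)$-local piece with $W_GH$-representations over $\QQ$: one must check that the functors in both directions are mutually quasi-inverse, which amounts to a bookkeeping argument with double cosets and the explicit form of $e_{(H)}$ in $A(G)_\QQ$. Once this equivalence is in hand for each $(H)$, taking the product yields the first displayed equivalence, and composing with the Greenlees--May homotopy-group functor $\linepi_*\colon \Ho(\GSp) \to \gr(\mack(G)_\QQ)$ cited just before the theorem gives the second. No model-categorical input beyond the Greenlees--May theorem is needed, since this is purely a statement about the algebraic category of rational Mackey functors.
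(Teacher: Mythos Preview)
The paper does not actually prove this theorem; it is stated as a result of Greenlees--May \cite{GM_Tate} and only a brief sketch of the mechanism is given in \cref{sec:idempotents_splittings}. That sketch---decompose $A(G)_\QQ$ via primitive idempotents, split any rational Mackey functor as $\bigoplus_{(H)} e_H M$, and then use the fact that $e_H M$ is freely determined by the $W_GH$-module $e_H M(G/H)$---is exactly the line you follow, so your proposal is on target and agrees with what the paper outlines.

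Two small points. First, the isomorphism $A(G)_\QQ \cong \prod_{(H)}\QQ$ is tom Dieck's result (the mark homomorphism), not Artin's theorem; the paper cites it as such in \cref{sec:idempotents_splittings}. Second, your description of the inverse functor via a formula like $K \mapsto V^{(K\cap N_GH)/H}$ is not quite right as written (that quotient need not make sense when $H\not\le K$), and the paper instead phrases it as ``$e_HM$ is freely generated by the $W_GH$-module $e_HM(G/H)$,'' deferring the verification to \cite{GM_Tate}. Your acknowledgment that this identification is the technical heart and your deferral to Greenlees--May and Th\'evenaz--Webb is appropriate, since a complete proof of the quasi-inverse is not expected here.
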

At each level of the grading, the functor from $\mack_\QQ(G)$ to $\prod\QXmod{W_GH}$ is given by sending a Mackey functor $M$ to the $W_GH$-module $e_HM(G/H)$ where $e_H$ is an idempotent in the rational Burnside ring for $G$ associated to the subgroup $H$.  We discuss these idempotents in more detail below in \cref{sec:idempotents_splittings}.

We are interested in incorporating additional structure that is not present in the homotopy category of rational $G$-spectra.  First, we wish to work at the model categorical level.
Theorem \ref{GM_appendix}'s splitting of the homotopy category of rational $G$-spectra for finite $G$ is mimicked to give a zig-zag of symmetric monoidal Quillen equivalences of monoidal model categories in \cite{KedziorekExceptional}
\begin{equation}\label{equation:alg_model}
 \GSp \simeq_Q \prod_{(H)\leq G}\Ch(\QQ[W_GH]\textrm{-mod}),
\end{equation}
where the product is over conjugacy classes of subgroups of $G$.  The model category $\prod_{(H)\leq G}\Ch(\QQ[W_GH]\textrm{-mod})$ has the objectwise projective model structure.
\begin{defn}\label{def:1} The model category
\[\prod_{(H)\leq G} \Ch(\QXmod{W_GH})\]
 is called the \emph{algebraic model for rational $G$-spectra} and is denoted $\A(G)_\QQ$.
\end{defn}
Note that the monoidal structure on $\A(G)_\QQ$ is given by tensor product over $\QQ$ in every product factor. One of the consequences of this result is that the derived image of the unit is the unit. That is, the sphere spectrum is sent to the constant sequence $\QQ$ concentrated in degree $0$ with trivial Weyl group actions.

Next we consider commutative ring structures on spectra.  This consideration is more subtle than in the non-equivariant case.  $G$-spectra have  a hierarchy of levels of ``equivariant commutativity'' \cite{BlumbergHill}.  Ring $G$-spectra with the \emph{lowest} level of commutativity are called \emph{naive}-commutative. Naive-commutative ring $G$-spectra are algebras for a $G$-operad equipped with a trivial $G$-action which is underlying $E_\infty$ when one forgets the $G$-action. An example of such a $G$-operad is the linear isometries operad on a trivial $G$-universe.

Barnes, Greenlees and K\k{e}dziorek \cite{BarnesGreenleesKedziorek} showed that commutative algebras in the algebraic model $\A(G)_\QQ$ for rational $G$-spectra model these \emph{naive-commutative} ring $G$-spectra.  That is, there is a zig-zag of Quillen equivalences
\begin{equation}\label{equivgivingnaivecommutativemodel}
\Comm_{\naive}(G\Sp_\QQ) \simeq_Q \Comm(\prod_{(H)\leq G}\Ch(\QQ[W_GH]\textrm{-mod})).
\end{equation}
\begin{defn}\label{def:2}
The model category $\Comm(\prod_{(H)\leq G}\Ch(\QXmod{W_GH}))$ is denoted by $\CommAG_\QQ$ and it has weak equivalences and fibrations created in $\A(G)_\QQ$. It is the \emph{algebraic model for rational naive-commutative ring $G$-spectra.}
\end{defn}

\begin{rem}
 Note that the product of these commutative differential graded algebras is equivalent to a diagram category
 \[
 \CommAG_\QQ= \prod_{(H)\leq G}   \QXcdga{W_GH}  \cong  \Orbx_G \textrm{-CDGA}_{\QQ},
 \]
Here  the category $\Orb_G$ is the orbit category spanned by transitive $G$-sets $G/H$ for $H \leq G$, and the morphisms are given by the set of $G$-equivariant maps. The category $\Orbx_G$ is the full subcategory of $\Orb_G$ consisting of isomorphisms.
\end{rem}

The image of a (naive-commutative ring) $G$-spectrum in the algebraic model is not very explicit, as the Quillen equivalences of (\ref{equation:alg_model}) and (\ref{equivgivingnaivecommutativemodel}) used in establishing the algebraic model use Shipley's result \cite{ShipleyHZ} (and Richter--Shipley's result \cite{RichterShipley}, respectively), which is not computationally trackable.

Let $\theta(X)$ denote the image of a naive-commutative rational $G$-spectrum $X$ in the algebraic model  $\CommAG_\QQ$.   The algebraic splitting of the category of graded Mackey functors (or commutative Green functors) using idempotents of the rational Burnside ring $A(G)\otimes \QQ$ is compatible with splitting rational $G$-spectra using the idempotents by \cite[Appendix A]{GM_Tate}.  Hence, by \cite{BarnesGreenleesKedziorek}  we know that the homotopy groups of the geometric fixed points of a naive-commutative ring $G$-spectrum $X$ are isomorphic to the homology of $\theta(X)$. That is, for each conjugacy class of subgroups $(H)$, the homology of the chain complex $\theta(X)_{(H)}$ is given by the homotopy groups of the $H$-geometric fixed points $\Phi^H(X)$:
\begin{equation}\label{homologyofimagehomotopygroupsofgeomfp}
  H_*(\theta(X)_{(H)})= \pi_*(\Phi^H(X)).
\end{equation}
 In fact, using this observation we can calculate the homology of the image of $X$ in the algebraic model using the splitting of rational Mackey functors, since
\[\pi_*(\Phi^H(X))\cong e_H\linepi_*(X)(H)
\]
where $e_H$ is an idempotent element in the Burnside ring $A(G)\otimes \QQ$. This compatibility is shown in \cite{GM_Tate}.

A key to identifying the image of a spectrum $X$ in the rational model is therefore to  calculate the idempotent pieces of the Mackey functors $\linepi_*(X)$.
In the next section we concentrate on understanding the action of the Burnside ring Green functor on a given Mackey functor and the behavior of the algebraic idempotent splitting.

\section{Splitting Mackey functors via idempotents in the Burnside ring}\label{sec:idempotents_splittings}

In this section, we give an overview of the idempotent splitting of rational Mackey functors for a finite group with the goal of providing the context necessary for the calculations in \cref{sec:calculations}. As mentioned, these results originate in \cite[Appendix A]{GM_Tate}.  For more details on the action of the Burnside ring Green functor on a Mackey functor $X$ and modern account of the idempotent splitting see \cite{BarnesKedziorek}.   We review the construction of the idempotent splitting in enough detail to suggest the essential calculational result, \cref{quotienttransfers}.  This result is proved in \cite{Schwede}.

 Let $G$ be a finite group. For the remainder of the paper, we suppress the notation for rationalization and let $A(G)$ denote the rational Burnside ring for $G$.  Recall that if $X$ is a finite $G$-set, then $X$ decomposes into orbits $G\cdot {x_1},\dots, G\cdot {x_n}$ and for each $x_i$ the orbit $G\cdot {x_i}$ is isomorphic to $G/\text{Stab}(x_i)$. For subgroups $H$ and $K$ in $G$, the orbits $G/H$ and $G/K$ are isomorphic as $G$-sets if and only if $H$ is conjugate to $K$. Thus a basis for $A(G)$ is given by
\[\{[G/H]\mid(H)\leq G\},\]
 where $(H)\leq G$ is used to denote a conjugacy class of subgroups in $G$. We will abuse notation by writing $(H)$ for both the set of subgroups conjugate to $H$ and for a single representative of this conjugacy class. Note if $K$ is another subgroup of $G$, the notation $(H)\leq K$ indicates $H$ is subconjugate to $K$ by an element of $G$.

By tom Dieck's result \cite[5.6.4, 5.9.13]{tomdieck}, the ring map
	\[
	\Phi\colon A(G) \to \prod_{(H)\leq G} \QQ \text{\qquad defined by\qquad } [X]\mapsto (\abs{X^{H}})_{(H)}.
	\]
	is an isomorphism. Thus it can be used to find idempotents in the ring $A(G)$. Define $e_{J}$ to be the pre-image of the projection onto the $(J)$-th factor in the product. That is,
\[e_{J}=\Phi^{-1}((\delta_{J}(H))_{(H)}),\]
where
\[
	\delta_{J}(H) = \begin{cases}1, &\text{if } (J)=(H) \\ 0,& \text{otherwise.} \end{cases}
	\]

Let $M$ be a rational Mackey functor on a finite group $G$. We can define an action of the Burnside ring $A(G)$ on $M$ as follows. Let $X$ and $Y$ be finite $G$-sets, and let
\[\pi\colon Y\times X \to Y\]
denote the projection. The action of $[X] \in A(G)$ on $M(Y)$ is given by the composite
	\begin{equation}\label{actionofburnsideringonamackeyfunctor}
	M(Y)\xto{\pi^{*}}M(Y\times X) \xto{\pi_{*}} M(Y),
	\end{equation}
 as is shown, for example, in \cite{GM_Mackey} or \cite{shulman2014equivariant}.
One can check this action is through ring maps, and so using the description of the idempotents in terms of the additive basis, we can decompose the Mackey functor $M$ as
\begin{equation}\label{idempotentsplittingofamackeyfunctor}
	M\cong \bigoplus_{(H)\leq G} e_HM.
\end{equation}
This is a decomposition as Mackey functors.  To deduce \cref{GM_appendix},
Greenlees and May make a further essential reduction by showing that for any $H$, the Mackey functor $e_HM$ is freely generated by the $W_GH$-module $e_HM(G/H)$.  Indeed, the ungraded case of \cref{GM_appendix} is an equivalence
\[ \mack(G)_\QQ \to \prod_{(H)\leq G} \QXmod{W_GH}\]
given by sending a Mackey functor $M$ to the sequence of modules $(V_H)$ defined by  \[V_H=e_HM(G/H).\]
The Weyl group action on $V_H$ is the inherent $W_GH$-action on the value of the Mackey functor $e_HM$ at $G/H$.

In order to understand the idempotent pieces of a Mackey functor more concretely, we would like an explicit description of the elements $e_H\in A(G)$.
The formula for the idempotents in terms of the additive basis was first introduced by Gluck in \cite{gluck}.
\begin{lemma}[\cite{gluck}]\label{lem:idempotentformula}
Let $H$ be a subgroup of $G$, then $e_H \in A(G)$ is given by the formula
\[
e_H =  \sum_{K \leqslant H} \frac{\abs{K}}{\abs{N_G H}} \mu(K,H) G/K
\]
where $\mu(K,H) = \Sigma_i (-1)^i c_i$ for $c_i$ the number of
strictly increasing chains of subgroups from $K$ to $H$ of length $i$.
The length of a chain is one less than the number of subgroups involved
and $\mu(H,H)=1$ for all $H \leqslant G$.
\end{lemma}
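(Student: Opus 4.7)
The plan is to show that the right-hand side of Gluck's formula has the correct image under the character isomorphism $\Phi\colon A(G)\to \prod_{(H)\leq G}\QQ$. Since $e_H$ is characterized by $\abs{(e_H)^J}=\delta_{(J),(H)}$ for every conjugacy class $(J)\leq G$, it suffices to verify this identity for the proposed expression on every subgroup $J$.

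First, I would rewrite each individual fixed-point count via the orbit--stabilizer relation: for any subgroups $K, J$ of $G$,
\[\abs{(G/K)^{J}} \;=\; \frac{1}{\abs{K}}\,\#\{g\in G : g^{-1}Jg\leq K\}.\]
Substituting into the candidate expression, the factor $\abs{K}$ in Gluck's coefficient cancels the $1/\abs{K}$ from orbit--stabilizer, yielding
\[\abs{(e_H)^{J}} \;=\; \frac{1}{\abs{N_GH}}\sum_{K\leq H}\mu(K,H)\,\#\{g\in G:g^{-1}Jg\leq K\}.\]

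Next, I would swap the order of summation. For each fixed $g\in G$ with $g^{-1}Jg\leq H$, the set of contributing $K$ is exactly the interval $[g^{-1}Jg,\,H]$ in the subgroup lattice of $H$ (and the inner sum is empty if $g^{-1}Jg\not\leq H$). The alternating chain-count $\mu(K,H)=\sum_i (-1)^i c_i$ is Philip Hall's formula for the Möbius function of this lattice, so it satisfies the standard Möbius identity
\[\sum_{L\leq K\leq H}\mu(K,H)\;=\;\delta_{L,H}.\]
Applying this with $L=g^{-1}Jg$ collapses the inner sum to $1$ when $g^{-1}Jg=H$ and to $0$ otherwise, giving
\[\abs{(e_H)^{J}} \;=\; \frac{1}{\abs{N_GH}}\cdot\#\{g\in G:g^{-1}Jg=H\}.\]
This counting set is empty unless $(J)=(H)$; when $J=H$ it is exactly $N_GH$, so $\abs{(e_H)^{H}}=1$. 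Hence $\abs{(e_H)^{J}}=\delta_{(J),(H)}$, as required.

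The argument is a two-step combinatorial manipulation (orbit--stabilizer followed by Möbius inversion on the subgroup poset of $H$), and no real obstacle stands in the way. The only subtlety is the bookkeeping: aligning the $1/\abs{K}$ from orbit--stabilizer against the $\abs{K}$ in Gluck's coefficient so that the factor $\abs{N_GH}$ emerges cleanly, and recognizing the alternating chain formula as Philip Hall's description of the Möbius function in order to apply the collapsing identity.
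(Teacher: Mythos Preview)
Your argument is correct: the orbit--stabilizer count cancels the $\abs{K}$ coefficient, the swap of summation isolates an interval in the subgroup lattice, and the defining identity of the M\"obius function collapses the sum to the characteristic function of $\{g:g^{-1}Jg=H\}$, whose cardinality is $\abs{N_GH}\cdot\delta_{(J),(H)}$. This is the standard verification of Gluck's formula.

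The paper itself does not supply a proof of this lemma; it simply records the statement and cites Gluck. So there is no in-paper argument to compare against. Your write-up is essentially the classical proof one finds in the original source and in standard references, and it is complete as stated.
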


Thus the action of the idempotent element $e_H$ on a Mackey functor $M$ can be calculated from the action of orbits $G/K$ on the groups $M(G/J)$ for subgroups  $K\leq H$.   From the description of the action of the Burnside ring (\ref{actionofburnsideringonamackeyfunctor}), we know that $G/K$ acts on $M(G/J)$ by a sum of composites $\tr{?}{J}\circ\res{?}{J}$ between $J$ and various groups that are subconjugate to both $J$ and $K$, but the coefficients of this sum are not transparent. Thus, in general, it is not that easy to calculate $e_H M(G/J)$, where $(H)\leq J$. However, in this paper we only need to compute $V_H= e_H M(G/H)$ as a  $\QQ [W_GH]$-module.  In \cite{Schwede}, Schwede gives an inductive argument on the lattice of subgroups to obtain the following elegant description.

\begin{lemma}\label{quotienttransfers} \cite[Theorem 3.4.22]{Schwede} For $H\leq G$,
\[
V_H \cong M(G/H) / t_HM,
\]
where $t_HM$ is the subgroup of $M(G/H)$ generated by transfers from proper subgroups of $H$. Note that the Weyl group action on the Mackey functor descends to a $W_GH$-action on the quotient $V_H$ because of the compatibility axiom
\[
c_{g,K} \tr{H}{K}= \tr{gH}{gK} c_{g,H},
\]
where $H\leq K$, $g \in G$ and $c_{g,H}$ is the conjugation map
\[
c_{g,H}\colon M(G/H) \to M(G/ gHg^{-1}).
\]
\end{lemma}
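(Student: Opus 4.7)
The plan is to prove that the composite
\[ V_H=e_HM(G/H)\hookrightarrow M(G/H)\twoheadrightarrow M(G/H)/t_HM \]
is an isomorphism of $\QQ[W_GH]$-modules. Bijectivity reduces to two claims, both computed directly from Gluck's formula (\cref{lem:idempotentformula}) and the double-coset description \eqref{actionofburnsideringonamackeyfunctor} of the Burnside ring action on $M$: (i) $e_H\cdot m\equiv m\pmod{t_HM}$ for every $m\in M(G/H)$, which gives surjectivity via the lift $[m]\mapsto e_Hm$; and (ii) $e_H$ annihilates $t_HM$, which gives injectivity since any $v\in V_H\cap t_HM$ satisfies $v=e_Hv=0$.

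For (i), expand $e_H\cdot m$ using Gluck's formula. For each $K\leq H$ the action
\[ [G/K]\cdot m=\sum_{HgK}\tr{H\cap gKg^{-1}}{H}\res{H\cap gKg^{-1}}{H}(m) \]
contributes to $t_HM$ whenever $H\cap gKg^{-1}$ is a proper subgroup of $H$. Since $|H\cap gKg^{-1}|\leq|K|$, this is automatic for every $K<H$; for $K=H$ it fails exactly when $g\in N_GH$, in which case $H\cap gHg^{-1}=H$ and the transfer-restriction composite is the identity. The double cosets $HgH$ with $g\in N_GH$ are in bijection with $W_GH=N_GH/H$, so the surviving terms contribute $|W_GH|\cdot m$. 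Since the coefficient of $[G/H]$ in $e_H$ is $|H|/|N_GH|=1/|W_GH|$, the net contribution to $e_H\cdot m$ modulo $t_HM$ is exactly $m$.

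For (ii), the $A(G)$-action on $M(G/H)$ factors through $\res{H}{G}\colon A(G)\to A(H)$, and Frobenius reciprocity then gives
\[ e_H\cdot\tr{K}{H}(n)=\tr{K}{H}\bigl(\res{K}{G}(e_H)\cdot n\bigr) \]
for all $K\leq H$ and $n\in M(G/K)$. It therefore suffices to check $\res{K}{G}(e_H)=0$ in $A(K)$ whenever $K<H$. Under the tom Dieck embedding $\Phi\colon A(K)\to\prod_{(J)\leq K}\QQ$, the $J$-coordinate of $\res{K}{G}(e_H)$ equals $|e_H^J|=\delta_{(J)_G,(H)_G}$; since $|J|\leq|K|<|H|$, no $J\leq K$ is $G$-conjugate to $H$, so $\res{K}{G}(e_H)=0$ and consequently $e_H\cdot\tr{K}{H}(n)=0$.

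Equivariance is then automatic: the Burnside ring action on $M(G/H)$ is natural with respect to $G$-set automorphisms, so it commutes with the $W_GH$-action and $V_H$ is $W_GH$-stable; the compatibility axiom in the statement, applied to $c_{n,H}\tr{K}{H}=\tr{nKn^{-1}}{H}c_{n,K}$ with $n\in N_GH$ and $K<H$, shows $t_HM$ is $W_GH$-stable, and $nKn^{-1}$ is again a proper subgroup of $H$. The main obstacle in the argument is the bookkeeping of the double-coset formula in (i) needed to cleanly separate the single surviving identity contribution from all the remaining terms that lie in $t_HM$; once that is in hand, (ii) is a short consequence of Frobenius reciprocity and the tom Dieck isomorphism.
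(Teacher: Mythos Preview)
Your proof is correct. The paper does not actually prove this lemma; it is stated as a citation to Schwede \cite[Theorem 3.4.22]{Schwede}, with only the remark that Schwede ``gives an inductive argument on the lattice of subgroups.'' Your argument is a genuinely different and self-contained route: rather than inducting over the subgroup lattice, you work directly with Gluck's formula and the double-coset description of the Burnside ring action to verify that the idempotent $e_H$ acts as the identity modulo $t_HM$ and annihilates $t_HM$. Part (i) is a clean bookkeeping exercise once one observes that for $K<H$ every intersection $H\cap gKg^{-1}$ is proper, while for $K=H$ only the $|W_GH|$ double cosets with $g\in N_GH$ give the identity contribution, exactly cancelling the coefficient $|H|/|N_GH|$. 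Part (ii) is a pleasant shortcut: the factorization through $\res{H}{G}$ together with Frobenius reciprocity reduces the claim to $\res{K}{G}(e_H)=0$ for $K<H$, which is immediate from tom Dieck's isomorphism since no subgroup of $K$ is $G$-conjugate to $H$.

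What your approach buys is that it stays entirely within the algebraic framework already set up in the paper (\cref{lem:idempotentformula} and the action \eqref{actionofburnsideringonamackeyfunctor}), so a reader need not consult Schwede. Schwede's inductive approach, by contrast, is more structural and fits into his broader treatment of geometric fixed points; it may generalize more readily but is less direct for this isolated statement.
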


\begin{rem} Observe that Maschke's theorem \cite[Proposition 1.5]{FultonHarris} applies to show that since $t_HM$ is a $\QQ[W_GH]$-module of $M(G/H)$, it has a complement and thus the quotient $M(G/H)/t_HM$ is in fact a direct summand of $M(G/H)$. This is of course necessary if $M(G/H)/t_HM$ is to be the  direct summand $e_HM(G/H)$.
\end{rem}

Lemma \ref{quotienttransfers} provides a tool at the heart of our strategy for computing the image $\theta(X)$ of a rational $G$-spectrum in the algebraic model.
For reference, we describe this strategy explicitly.  This is the procedure we employ in the next two sections to calculate the image of $\KU_\QQ^G$.
\begin{strategy}\label{strategyforcalculatingmodel} Let $X$ be a naive-commutative rational $G$-spectrum. A general strategy for attempting to calculate $\theta(X)$ is to do the following for a representative $H$ of each conjugacy class of subgroups of $G$:
\begin{enumerate}
\item Use \cref{quotienttransfers} to calculate $V_H=e_H\linepi_*(X)(G/H)$, together with its graded algebra structure.
\item Show that $V_H$ is formal as a commutative differential graded $\QQ[W_GH]$-algebra.
\item Use (\ref{homologyofimagehomotopygroupsofgeomfp}) to conclude that the  $(H)$-coordinate of $\theta(X)$ is weakly equivalent to $V_H$.
\end{enumerate}
These steps imply that each component of $\theta(X)$ is weakly equivalent to $V_H$.  Since the model category $\CommAG_\QQ$ is a product, we obtain a weak equivalence
\[\theta(X)\simeq (V_H).\]
\end{strategy}

We begin by illustrating this strategy on two simple examples, the Eilenberg--MacLane spectrum for the constant commutative Green functor $\constQ$ and the Eilenberg--MacLane spectrum for the rational Burnside Green functor $\burnQ$.  In both cases, the formality of Step 2 is immediate and the focus is on calculating the idempotent pieces of the Green functors using \cref{quotienttransfers}.

  \begin{ex} Let $G$ be a finite group.   Suppose $\constQ$ is the constant Green functor with value $\QQ$, i.e.  the value at any orbit $G/H$ is given by
 \[\constQ (G/H) = \QQ, \]   where the action of the Weyl group $W_GH$ is trivial. For $K \leq H\leq G$,
all restriction and conjugation maps are the identities and the transfer maps are given by \begin{align*}
\tr{K}{H}\colon \QQ & \rightarrow \QQ \\
          x & \mapsto \sum_{\gamma \in  W_HK} \gamma\cdot x=\sum_{\gamma \in  W_HK}x=\abs{W_HK} x.
\end{align*}
 Hence, for any subgroup $H$, the image of the transfer from the trivial subgroup $e$ is
\[ \Im(\tr{e}{H})= \QQ. \]
Thus the homology of the image of the equivariant rational Eilenberg--MacLane spectrum $\HQ$ in the algebraic model is
\[ V_H= \QQ / t_H\constQ=0,\]
for all subgroups $H \leq G$ except for the trivial subgroup, where
\[ V_e = \constQ (G/e)= \QQ . \]
Notice that $V_e=\QQ$ is concentrated in degree $0$ and is formal as a $\CDGA$. Thus the image of $\HQ$ in the algebraic model is weakly equivalent to the sequence of CDGAs with value $\QQ$ at the trivial group and zero at other groups.
 \end{ex}

\begin{ex}\label{sphere example}
Let $G$ be a finite group. The rational Burnside Mackey functor $\burnQ$ is the representable functor $\mathcal{B}^{op}_G(-,G/G)\otimes \QQ$ where $\mathcal{B}_G$ is the Burnside category. One can check that this is isomorphic to the Mackey functor whose value on the orbit $G/H$ is given by $A(H)$, the rational Burnside ring of $H$. For $H\leq K\leq G$, the restriction maps are given by
\[ \res{H}{K}([Y])=[i_H^{\ast}(Y)] , \]
 where $Y$  is a $K$-set, and
 \[
 i_H^{\ast}\colon \set^K \to \set^H
 \] is the forgetful functor from the category $\set^K$ of finite $K$-sets to the category of finite $H$-sets.
 The transfer maps are given by induction, i.e.
\[ \tr{H}{K}([X])=[K\times_H X], \]
where  $K\times_H X$ is the quotient of $K\times X$ given by $(kh,x)\sim(k,hx)$ for all $h\in H$. Note that $K$ acts on the left coordinate of the set $K\times_H X$.

Recall that the rational equivariant sphere spectrum $S_\QQ$ is the Eilenberg--MacLane spectrum $H\burnQ$ for the Burnside ring Mackey functor.   Using \cref{quotienttransfers} we can calculate the image of $S_\QQ$ in the algebraic model $\CommAG_\QQ$ by calculating the image of the transfers.

For each subgroup $H$, $\burnQ(G/H)=A(H)$ has an orbit basis given by
\[\{[H/J] \mid (J)_H\leq H\}.\]
Here it is important that we consider conjugation by $H$ instead of conjugation by $G$, and in general the conjugacy class $(J)_H$ may contain strictly fewer subgroups than $(J)_G$.   Let $J$ be a proper subgroup of $H$.  Observe that the image of the element $[J/e]\in A(J)$ under the transfer map $\tr{J}{H}\colon A(J)\to A(H)$ is
\[ [H\times_J J/e]=[H/J].\]
Hence all basis elements of the form $[H/J]$ for proper $J\leq H$ are in the image of the transfer.  Moreover, explicit calculation shows that no fixed $H$-set is in the image of a transfer map $\tr{J}{H}\colon A(J)\to A(H)$. Therefore, for each conjugacy class $(H)$, we have an isomorphism
\[V_H=A(H)/t_H\burnQ\cong \QQ\{[H/H]\},\]
concentrated in degree zero.  The $W_GH$-action on $A(H)$ is via conjugation, and is hence trivial on the basis element $[H/H]$; thus $V_H$ has a trivial $W_GH$-action.

This determines the homology of $\theta(\s)$ in $\CommAG_\QQ$. Since for each $(H)$, $\QQ$ concentrated in degree zero is formal as an object of $\QXcdga{W_GH}$, we find that $\theta(S_\QQ)$ is weakly equivalent to the constant sequence of $\QQ$'s with trivial Weyl group actions.
\end{ex}

\begin{rem} In fact, the image $\theta(\s)$ can be deduced from the construction of the zig-zag of Quillen equivalences in  \cite{KedziorekExceptional}.  This is a zig-zag of (symmetric) monoidal Quillen equivalences and thus, as mentioned in \cref{sec:reviewrationalmodels}, it sends the unit $\s$ in rational $G$-ring spectra to the unit in $\A(G)_\QQ$. Since the zig-zag of Quillen equivalences for naive-commutative rational ring $G$-spectra from \cite{BarnesGreenleesKedziorek} is a lift of the zig-zag of \cite{KedziorekExceptional}, the statement follows.  The calculation in \cref{sphere example} is presented as an illustration of the computational techniques on a familiar example.
\end{rem}

\section{Rational representation rings and Bott periodicity: homology level calculations}\label{sec:calculations}

Our main goal is to calculate the image of the ring spectrum $\KU^G_\QQ$ in the algebraic model $\CommAG_\QQ$ by implementing \cref{strategyforcalculatingmodel}.  Thus the first step towards understanding the image $\theta(\KU_\QQ^G)$ is to calculate the homotopy Mackey functors $\linepi_*(\KU_\QQ^G)$ and their idempotent splittings via the techniques of \cref{sec:idempotents_splittings}.  As in Display (\ref{homologyofimagehomotopygroupsofgeomfp}), the result of these calculations is the homology of $\theta(\KU_\QQ^G)$, which is recorded as \cref{lem:homology_of_image}.

Recall that $\linepi_0\KU^G_\QQ\cong \repQ^G$ where $\repQ^G$ is the rationalized representation ring Mackey functor. That is, the value of $\repQ^G$ at an orbit $G/H$ is the rationalization of the Grothendieck ring of complex $H$-representations $RU(H)$, the restriction maps are given by the restriction of representations, and the transfer maps are given by the induction of representations. The action of $W_GH$ on $RU(H)$ is given by $g\cdot[V]=[V_g]$ where $V_g$ is the $H$-representation such that $h\cdot v = (ghg^{-1})v$. We begin by studying the Eilenberg--MacLane spectrum for this Mackey functor. In order to describe the action of the Weyl group on the homology, we first define the following function.

\begin{defn}\label{defnofactionofweylgroupfunction}
Let $H$ be a cyclic subgroup of $G$ of order $n$ with a chosen generator $g$. Let $m_H$ denote the function $m_H\colon W_GH\to \ZZ/n$ where $m_H(a)\in \{1,\dots, n\}$ is such that $a^{-1}ga=g^{m_H(a)}$.
\end{defn}
In fact, $m_H\colon W_GH\to \ZZ/n$ is a homomorphism into the (multiplicative) group  of units $(\ZZ/n)^\times \subset \ZZ/n$.
\begin{lemma} The map $m_H\colon W_GH\to \ZZ/n$ is a group homomorphism $W_GH\to (\ZZ/n)^\times$.
\end{lemma}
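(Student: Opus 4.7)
The plan is to verify three things in turn: that $m_H$ descends to a well-defined function on the Weyl group $W_GH = N_GH/H$, that its image actually lies in the units $(\ZZ/n)^\times$, and finally that it respects the group operation.

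First I would check well-definedness. If $a$ and $a' = ah$ represent the same coset in $W_GH$ for some $h \in H$, then since $H$ is cyclic (hence abelian), $h$ commutes with every power of $g$, and
\[
(ah)^{-1} g (ah) = h^{-1}(a^{-1}ga)h = h^{-1} g^{m_H(a)} h = g^{m_H(a)}.
\]
So $m_H(a') = m_H(a) \pmod n$, confirming that $m_H$ is well-defined on cosets.

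Next, to see the image lies in $(\ZZ/n)^\times$, I would note that conjugation by $a^{-1}$ is an automorphism of $G$ that sends $H$ to $H$ (since $a \in N_GH$), hence restricts to an automorphism of $H$. Because $g$ generates $H$, its image $g^{m_H(a)}$ must also generate $H$, which forces $\gcd(m_H(a), n) = 1$, i.e.\ $m_H(a) \in (\ZZ/n)^\times$.

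Finally, for the homomorphism property, I would compute directly: for $a, b \in N_GH$,
\[
(ab)^{-1} g (ab) = b^{-1}(a^{-1} g a) b = b^{-1} g^{m_H(a)} b = \bigl(b^{-1} g b\bigr)^{m_H(a)} = g^{m_H(b)\, m_H(a)},
\]
so $m_H(ab) \equiv m_H(a)\, m_H(b) \pmod n$. Since $(\ZZ/n)^\times$ is abelian this matches either ordering. None of these steps is really an obstacle; the lemma amounts to recording the standard identification of automorphisms of a cyclic group of order $n$ with $(\ZZ/n)^\times$, specialized to the conjugation action of $W_GH$ on $H$.
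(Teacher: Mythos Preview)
Your proof is correct and follows essentially the same approach as the paper's, which simply asserts that $m_H(ab)=m_H(a)m_H(b)$, $m_H(e)=1$, and that $m_H(a^{-1})$ is inverse to $m_H(a)$. You are more thorough: you additionally verify well-definedness on cosets of $H$ (which the paper omits), and you argue that the image lies in $(\ZZ/n)^\times$ via the observation that $g^{m_H(a)}$ must generate $H$, rather than by exhibiting $m_H(a^{-1})$ as an explicit inverse; both arguments are standard and equivalent in difficulty.
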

\begin{proof}
It is straightforward to check that $m_H(ab)=m_H(a)m_H(b)$ and if $e\in W_GH$ is the identity, $m_H(e)=1$; moreover $m_H(a^\inv)$ is clearly the inverse to $m_H(a)$ so that the image of $m_H$ is contained in the units.
\end{proof}

We can now state the homology of the image of $H\repQ^G$ in the algebraic model.

\begin{lemma}\label{homologyofrepnringmackeyfunctor}
The homology of $\theta(H\repQ^G)$, the image of $H\repQ^G$ in the algebraic model, is given by $(V_H)_{(H)\leq G}$ where $V_H=0$ when $H$ is not cyclic and when $H$ is cyclic of order $n$, $V_H\cong\QQ(\zeta_n)$, where $\QQ(\zeta_n)$ is the field extension of $\QQ$ by a primitive $n$-th root of unity $\zeta_n$. The action of $a\in W_GH$ on $\QQ(\zeta_n)$ is given by $a\cdot \zeta_n = \zeta_n^{m_H(a)}$.  That is, the action of $W_GH$ is given via the homomorphism $m_H\colon W_GH\to (\ZZ/n)^\times\cong \Galgp(\QQ(\zeta_n)/\QQ)$.
\end{lemma}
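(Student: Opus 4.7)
The plan is to implement the first two steps of \cref{strategyforcalculatingmodel} applied to $X = H\repQ^G$. For each conjugacy class $(H)\leq G$, I compute $V_H = RU(H)_\QQ / t_H\repQ^G$ via \cref{quotienttransfers}; the formality of Step 2 is automatic because the answer sits in degree zero. The $W_GH$-action on $V_H$ is then read off from the inherent conjugation action on $\repQ^G(G/H) = RU(H)_\QQ$.

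The non-cyclic case is immediate from Artin's induction theorem: every rational virtual character of $H$ is a $\QQ$-linear combination of characters induced from cyclic subgroups of $H$. When $H$ is not cyclic, all such subgroups are proper; since the transfers $\tr{K}{H}$ in $\repQ^G$ are exactly the induction maps on representation rings, $t_H\repQ^G = RU(H)_\QQ$, and hence $V_H = 0$.

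For the cyclic case, write $H = \langle g\rangle$ of order $n$, with proper subgroups $K_d = \langle g^{n/d}\rangle$ of order $d$ for each proper divisor $d \mid n$. I would identify $RU(H)\otimes\QQ$ with $\QQ[\chi]/(\chi^n-1)$, where $\chi(g)=\zeta_n$, and apply the cyclotomic factorization $\chi^n - 1 = \prod_{d\mid n}\Phi_d(\chi)$ to obtain the Chinese remainder decomposition
\[
RU(H)_\QQ \;\cong\; \prod_{d\mid n}\QQ(\zeta_d).
\]
Frobenius reciprocity shows that the image of $\tr{K_d}{H}$ is the ideal of $RU(H)_\QQ$ generated by $\tr{K_d}{H}(\mathbf{1}) = [H/K_d]$. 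A direct character computation gives $[H/K_d] = \sum_{k=0}^{n/d-1}\chi^{kd}$, which equals $(\chi^n-1)/(\chi^d-1) = \prod_{e\mid n,\,e\nmid d}\Phi_e(\chi)$ modulo $\chi^n-1$. In the product decomposition this element is zero in the factor $\QQ(\zeta_e)$ whenever $e \nmid d$ and a unit whenever $e \mid d$, so the ideal it generates is the subproduct $\prod_{e\mid d}\QQ(\zeta_e)$. Summing over proper divisors $d$ of $n$ yields $t_H\repQ^G \cong \prod_{e\mid n,\,e<n}\QQ(\zeta_e)$, and the quotient is the surviving top factor $\QQ(\zeta_n)$.

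For the Weyl group action, a lift of $a\in W_GH$ in $N_GH$ twists an $H$-representation $V$ into $V_a$ with character $\chi_{V_a}(h) = \chi_V(a^{-1}ha)$. Applied to $\chi$ and evaluated on $g$, this gives $(a\cdot\chi)(g) = \chi(g^{m_H(a)}) = \zeta_n^{m_H(a)}$. Under the quotient $RU(H)_\QQ \twoheadrightarrow \QQ(\zeta_n)$ the class of $\chi$ is $\zeta_n$, so the induced $W_GH$-action on $V_H\cong\QQ(\zeta_n)$ is the Galois action $\zeta_n\mapsto\zeta_n^{m_H(a)}$ through $m_H\colon W_GH\to(\ZZ/n)^\times\cong \Galgp(\QQ(\zeta_n)/\QQ)$. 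The main obstacle is the explicit identification of the image of $\tr{K_d}{H}$ with the correct subproduct in the Chinese remainder decomposition; the character computation of $[H/K_d]$ combined with the cyclotomic factorization of $\chi^n-1$ handles this, after which the quotient identification and the compatibility of the $W_GH$-action with $m_H$ follow by inspection.
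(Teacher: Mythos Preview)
Your proposal is correct and follows essentially the same approach as the paper's own proof: Artin induction for the non-cyclic case, the Chinese remainder decomposition of $\QQ[\chi]/(\chi^n-1)$ for cyclic $H$, the explicit character computation of $[H/K_d]$ and its cyclotomic factorization to identify the image of each transfer, and the same twisted-representation computation for the Weyl group action. The only cosmetic difference is that you invoke Frobenius reciprocity together with surjectivity of restriction to conclude directly that $\operatorname{im}(\tr{K_d}{H})$ is the ideal generated by $[H/K_d]$, whereas the paper computes $\tr{H_d}{H}(x_d^\ell)$ for each $\ell$; your formulation is marginally cleaner but the content is identical.
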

\begin{proof}
Let $H$ be a subgroup of $G$ and consider the map
	\[
	\bigoplus_{C\leq H} \text{ind}_C^H\colon \bigoplus_{C\leq H} RU(C)\otimes \QQ \to RU(H) \otimes \QQ
	\]
where $C$ runs over all cyclic subgroups of $H$. By a theorem of Artin, this map is surjective (see \cite[9.2.17]{Serre1977}, for example). By Lemma \ref{quotienttransfers}, the module $V_H$ is found by quotienting the image of all transfers of proper subgroups, so we immediately see $V_H=0$ if $H$ is not cyclic.

Now suppose $H$ is a cyclic subgroup of order $n$. We first show $V_H\cong \QQ(\zeta_n)$ as a $\QQ$-algebra.
Fix a generator $g$ for $H$. For each divisor $d$ of $n$, there is one subgroup of $H$ of order $d$, and the fixed generator for this subgroup will be $g^{n/d}$. Denote this subgroup by $H_d$. Note that each subgroup $H_d$ gives rise to a unique conjugacy class of subgroups in $G$.

In what follows, fix a primitive $n$-th root of unity $\zeta_n$ and let $\zeta_d$ be the primitive $d$-th root of unity $\zeta_n^{n/d}$. The complex representation ring for the cyclic group of order $d$ is isomorphic to $\QQ[x_d]/(x_d^d-1)$ where $x_d$ corresponds to the one-dimensional irreducible representation such that the generator $g^{n/d}$ acts via multiplication by $\zeta_d$. Here we use the subscript $d$ to keep track of which subgroup we are considering. The choices made in the previous paragraph show the restriction maps are given by $\res{H_{d_1}}{H_{d_2}}(x_{d_2})=x_{d_1}$ for two divisors of $n$ such that $d_1\mid d_2$.

The polynomial $x_d^d-1$ factors as a product of cyclotomic polynomials $\Phi_j$ where $j\mid d$. By the Chinese remainder theorem, the map
	\begin{align*}
	\QQ[x_d]/(x_d^d-1)&\to \prod_{j\mid d} \QQ[x_d]/(\Phi_j(x_d))\\
	f(x)&\mapsto (f(x), \dots, f(x))
	\end{align*}
is an isomorphism. We will use this interpretation throughout our computation.

We need to compute the image of the various transfer maps in
	\[
	RU(H)\otimes \QQ \cong \prod_{j\mid n}\QQ[x_n]/(\Phi_j(x_n))\cong \prod_{j\mid n}\QQ(\zeta_j).
	\]
For a divisor $d$, let's start by finding $\tr{H_d}{H}(1)$. The unit is given by the one-dimensional trivial representation, and inducing the trivial $H_d$-representation gives the $H$-representation $\mathbb{C}[H/H_d]$. Using character theory, one can check
	\[
	[\mathbb{C}[H/H_d]]=1+x_n^{d}+x_n^{2d}+\dots+x_n^{(\sfrac{n}{d}-1)d}.
	\]
We can factor this as a product of cyclotomic polynomials by observing
	\[
	x^n_n-1=(x_n^d-1)(1+x_n^{d}+x_n^{2d}+\dots+x_n^{(\sfrac{n}{d}-1)d}), ~\text{and so}
	\]
	\[
	\tr{H_d}{H}(1)=1+x_n^{d}+x_n^{2d}+\dots+x_n^{(\sfrac{n}{d}-1)d}=\prod_{j \nmid d} \Phi_j(x_n).
	\]
Thus the transfer of $1$ is zero in all factors indexed by divisors of $n$ that do not divide $d$. For the divisors of $d$, the above shows the $j$-th factor is given by
\[
(\tr{H_d}{H}(1))_j=(\sfrac{n}{d}-1)d+1=n-d+1
\]
because $x_n^d=(x_n^j)^{\sfrac{d}{j}}=1$ in this factor. To summarize, let $\delta_{j,d}$ be defined by $\delta_{j,d}=1$ if $j\mid d$ and $\delta_{j,d}=0$ if $j\nmid d$. We have shown
	\[
	(\tr{H_d}{H}(1))_j=\delta_{j,d}\cdot (n-d+1).
	\]
By Frobenius reciprocity, $\tr{H_d}{H}(x_d^\ell)=\tr{H_d}{H}(\res{H_d}{H}(x_n^\ell)\cdot 1) = \tr{H_d}{H}(1)\cdot x^\ell_n$, and so
	\[
	(\tr{H_d}{H}(x_d^\ell))_j=\delta_{j,d}(n-d+1)\cdot\zeta_j^\ell.
	\]
We conclude the image of the transfer is given by
	\[
	\im(\tr{H_d}{H})= \text{Span}\{(\delta_{j,d}\zeta_j^\ell)_{j\mid n} \mid \ell=0,\dots, d-1 \}.
	\]

To find $V_H$, we need to quotient by $\im(\tr{H_d}{H})$ for all divisors $d$ of $n$ such that $d\neq n$. We can do this inductively beginning with $d=1$, and the above shows that everything will be killed except for the factor
	\[
	\QQ[x_n]/(\Phi_n(x_n)) = \QQ(\zeta_n).
	\]
Thus $V_H\cong \QQ(\zeta_n)$.

Next we determine the action of the Weyl group. Observe the action of $a \in W_GH$ on $H$ is given by $a\cdot g = a^{-1}ga=g^{m_H(a)}$. The action on $x_n\in RU(H)$ is determined as follows. The class $x_n$ is represented by the representation $V$ that is a one-dimensional complex vector space such that $g$ acts via multiplication by $\zeta_n$. The twisted representation $a\cdot V = V_a$ has the same underlying vector space as $V$, but the action of $g$ is given by first conjugating by $a$. Thus $g$ acts in $V_a$ as $a^{-1}ga$ acts in $V$. Hence the action of $g$ on $V_a$ is given by multiplication by $\zeta_n^{m_H(a)}$ and $a\cdot x_n=x_n^{m_H(a)}$. From the proof above, we see this corresponds in the quotient $V_H$ to $a\cdot \zeta_n = \zeta_n^{m_H(a)}$.
\end{proof}

\begin{rem}\label{rem:trivAction}
If $G$ is abelian, then the conjugation action of the Weyl group using the function $m_H$ of \cref{defnofactionofweylgroupfunction} is trivial.  Thus the above analysis implies that the action of $W_GH$ on $\QQ(\zeta_n)$ is trivial.
\end{rem}

Using equivariant Bott periodicity (see for example \cite{Segal} or \cite[XIV.3]{AlaskaNotes}), we extend \cref{homologyofrepnringmackeyfunctor} to a result about $\KU_\QQ^G$.

\begin{lemma}\label{lem:homology_of_image}
The homology of $\theta(\KU_\QQ^G)$, the image of $\KU_\QQ^G$ in the algebraic model, is given by $V_H=0$ when $H$ is not cyclic and $V_H=\QQ(\zeta_n)[\beta^{\pm 1}]$ with $\abs{\beta}=2$ when $H$ is a cyclic group of order $n$. The action of $a\in W_GH$ on $\QQ(\zeta_n)[\beta^{\pm 1}]$ is given by $a\cdot \zeta_n = \zeta_n^{m_H(a)}$ and $a\cdot \beta=\beta$.
\end{lemma}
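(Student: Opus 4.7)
The plan is to reduce the computation to the already-proved \cref{homologyofrepnringmackeyfunctor} by exploiting the multiplicative structure provided by equivariant Bott periodicity. First I would recall that equivariant Bott periodicity (in the form cited from \cite{Segal} or \cite[XIV.3]{AlaskaNotes}) provides a Bott class $\beta \in \pi_2^G(\KU_\QQ^G)$ such that multiplication by $\beta$ induces an isomorphism $\pi_n^H(\KU_\QQ^G) \xrightarrow{\cong} \pi_{n+2}^H(\KU_\QQ^G)$ for every subgroup $H \leq G$. Combined with $\linepi_0(\KU_\QQ^G) \cong \repQ^G$ and the vanishing of odd homotopy groups nonequivariantly (hence in each fixed-point spectrum), this identifies the graded homotopy Mackey functor as $\linepi_*(\KU_\QQ^G) \cong \repQ^G[\beta^{\pm 1}]$.

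Next I would apply \cref{quotienttransfers} one degree at a time. The crucial point is that multiplication by $\beta$ is compatible with restrictions, transfers, and conjugations, since $\beta$ is pulled back along the restriction from $G$ and the transfer is a map of $\repQ^G$-modules via Frobenius reciprocity. Therefore $\beta\cdot(-)$ commutes with the process of taking the quotient by transfers from proper subgroups, and induces an isomorphism $V_H \cdot \beta^k \xrightarrow{\cong} V_H \cdot \beta^{k+1}$ between the degree-$2k$ and degree-$(2k+2)$ pieces of $\theta(\KU_\QQ^G)_{(H)}$. Plugging in the result of \cref{homologyofrepnringmackeyfunctor} for the degree-zero piece, this yields $V_H = 0$ when $H$ is not cyclic, and $V_H = \QQ(\zeta_n)[\beta^{\pm 1}]$ with $|\beta|=2$ when $H$ is cyclic of order $n$.

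Finally I would track the Weyl group action. On the degree-zero part $\QQ(\zeta_n)$ the action was already determined in \cref{homologyofrepnringmackeyfunctor} to be $a \cdot \zeta_n = \zeta_n^{m_H(a)}$. Since $\beta$ arises by restriction from a class in $\pi_2^G(\KU_\QQ^G)$, it is in particular invariant under the conjugation action of $N_GH$ on $\pi_2^H(\KU_\QQ^G)$, so $a \cdot \beta = \beta$ for all $a \in W_GH$. Combining these, the action is as stated. The step I would expect to require the most care is the verification that $\beta$ is Weyl-group invariant and that $\beta\cdot(-)$ is genuinely a map of Mackey functors---once this is in hand, the degree-wise bootstrapping from \cref{homologyofrepnringmackeyfunctor} is essentially automatic.
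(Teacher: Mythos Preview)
Your proposal is correct and follows essentially the same approach as the paper: both identify $\linepi_*(\KU_\QQ^G)\cong \repQ^G[\beta^{\pm 1}]$ via equivariant Bott periodicity, verify that multiplication by $\beta$ is compatible with the Mackey structure (the paper does this via a naturality square for restriction, transfer, and conjugation; you invoke restriction-from-$G$ and Frobenius reciprocity, which amounts to the same thing), and then read off $V_H$ degreewise from \cref{homologyofrepnringmackeyfunctor}. One small wording issue: the odd vanishing $\pi_1^H(\KU^G)=0$ is not literally a consequence of the nonequivariant statement via ``each fixed-point spectrum,'' since $(\KU^G)^H$ has $\pi_0=RU(H)$ rather than $\ZZ$; the paper simply cites \cite[XIV.3]{AlaskaNotes} for this, and you should too.
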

\begin{proof}

We begin by reviewing the Mackey functor structure of $\linepi_*\KU_\QQ^G$. Recall $\linepi_0(\KU_\QQ^G)\cong \repQ^G$. In fact, $\linepi_0(\KU^G)\cong \RU^G$ before rationalizing. For any complex representation $V$ and any finite pointed $G$-$CW$ complex $X$, Bott periodicity provides a natural isomorphism
	\[
	[X,\KU^G]_G \cong [S^V\sma X,\KU^G]_G
	\]
which is given by multiplication by the Bott class $\beta_V$ \cite[{Theorem XIV.3.2}]{AlaskaNotes}. Let $\beta$ denote $\beta_{\CC}$ where $\CC$ is the one-dimensional trivial representation. 

If $X=G/H$, then the Bott periodicity isomorphism shows
	\[
	\KU^{j}_G(G/H)=[G/H_+, \Sigma^j\KU^G]_G \cong [S^{2n}\sma G/H_+, \Sigma^j\KU^G]_G = \KU^{-2n+j}_G(G/H)
	\]
where the isomorphism is given by multiplying by $\beta^n$. For any $G$-spectrum $E$, the coefficient Mackey functor $\underline{E}^{-*}(\mathit{pt})$ is isomorphic to the homotopy group Mackey functor $\linepi_{*}(E)$. Thus on the homotopy group level,
	\[
	\linepi_j(\KU^G)(G/H)\cong \linepi_{j+2n}(\KU^G)(G/H).
	\]
When $j$ is odd, note $\linepi_{1}(\KU^G)(G/H)=0$ from the comments in \cite[{Section XIV.3}]{AlaskaNotes}, and thus by periodicity, all homotopy groups in odd degrees are zero. We have determined that as a graded ring, $\linepi_*(\KU^G)(G/H)\cong RU(H)[\beta^{\pm 1}]$ where $\abs{\beta}=2$. For clarity of the proof, we will decorate the polynomial generator $\beta\in \linepi_*(\KU^G)(G/H)$ with a subscript $H$ to keep track of which level of the Mackey functor it lives in.

We next determine the restriction, transfer, and Weyl group action on the elements $\beta_H$. For $H\leq K$, the restriction map is induced by the quotient map $G/H \to G/K$, and so the naturality of the Bott class implies the following diagram commutes
\[
\xymatrix{
\KU^0_G(G/K)\ar[r]^{\cdot\beta_K} \ar[d]_{\res{H}{K}}& \KU^{2}_G(G/K)\ar[d]_{\res{H}{K}}\\
\KU^0_G(G/H)\ar[r]^{\cdot\beta_H} & \KU^{n}_G(G/H)
}
\]
Consider the image of $1\in \KU^0_G(G/K)$. Going around the diagram the two different ways will show $\res{H}{K}(\beta_K)=\beta_H$ in $\linepi_*(\KU_G)$. The transfer map is also induced by a stable map of orbits and $\tr{H}{K}(1)=\abs{W_KH}$, so we have $\tr{H}{K}(\beta_H)=\abs{W_KH}\beta_K$.

To find the action of the Weyl group, note the action by $a\in W_GH$ is induced by the map $a\colon G/H\to G/H$, $eH\mapsto aH$. By taking $H=K$ in the diagram above and replacing restriction by the map induced by $a$, we can consider the image of the unit and use that $a\cdot 1 = 1$ to see $a\cdot \beta_H=\beta_H$.

We now return to the algebraic model. The value of $V_H$ as a $\QQ$-algebra follows readily from \cref{homologyofrepnringmackeyfunctor} and the periodicity shown above. The action of $W_GH$ on $\QQ(\zeta_n)$ is the same as that of \cref{homologyofrepnringmackeyfunctor}, and the action on $\beta$ is trivial since it was trivial in the original Mackey functor.
\end{proof}

Since the homotopy groups of idempotent pieces of $\KU_\QQ^G$ are a graded field, we obtain the following result.

\begin{cor}\label{cor:modules}
Let $G$ be a finite group and $X$ be a module spectrum over $\KU_\QQ^G$. Then
\[ X\simeq \bigoplus_{(H)} (e_{(H)}\KU_\QQ^G)^{\oplus i_H}\oplus (e_{(H)}\KU_\QQ^G)^{\oplus j_H},
\]
where  $H \leq G$, and  $i_H$ and $j_H$ are nonnegative integers.
\end{cor}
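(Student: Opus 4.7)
The plan is to combine the idempotent splitting of rational $G$-spectra with the classical fact that modules over a ring spectrum whose homotopy ring is a graded field are free.

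First, the rational Burnside ring idempotents produce a splitting
\[
\KU_\QQ^G \simeq \bigoplus_{(H)\leq G} e_{(H)}\KU_\QQ^G
\]
of commutative ring $G$-spectra, and consequently any $\KU_\QQ^G$-module $X$ splits canonically as $X \simeq \bigoplus_{(H)} e_{(H)}X$, with each summand a module over the corresponding idempotent ring. It therefore suffices to prove the splitting statement one idempotent factor at a time.

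Next, by \cref{lem:homology_of_image} the spectrum $e_{(H)}\KU_\QQ^G$ is contractible when $H$ is non-cyclic, and when $H$ is cyclic of order $n$ its $H$-fixed homotopy ring is the graded field $\QQ(\zeta_n)[\beta^{\pm 1}]$, concentrated in even degrees. For any ring spectrum $R$ whose homotopy ring is a graded field, every $R$-module $M$ is free: one chooses a homogeneous $\pi_*R$-basis of $\pi_*M$, lifts each basis element $x$ to a map $\Sigma^{|x|}R \to M$, and wedges these together to obtain a map from a free $R$-module to $M$ inducing an isomorphism on homotopy, hence a weak equivalence. Applied to $R = e_{(H)}\KU_\QQ^G$, whose homotopy lives in even degrees only, the basis elements of $\pi_*(e_{(H)}X)$ partition into those in even degrees, contributing $i_H$ copies of $e_{(H)}\KU_\QQ^G$, and those in odd degrees, contributing $j_H$ copies of $\Sigma e_{(H)}\KU_\QQ^G$. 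This produces precisely the stated decomposition.

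The main point requiring justification is that a map of $e_{(H)}\KU_\QQ^G$-modules which is an isomorphism on $\pi_*^H$ is automatically a weak equivalence of $G$-spectra. This holds because, by construction, every homotopy Mackey functor of an $e_{(H)}\KU_\QQ^G$-module is concentrated on the $(H)$-idempotent summand, and via the equivalence of \cref{GM_appendix} such Mackey functors are determined up to isomorphism by their value at $G/H$ as a $\QQ[W_GH]$-module. With this in hand, the free-module construction above produces a genuine weak equivalence of $G$-spectra, completing the argument.
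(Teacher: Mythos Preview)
Your argument is correct and follows essentially the same route as the paper's proof: split via the Burnside idempotents (the paper cites Barnes for this), observe that each piece $e_{(H)}\KU_\QQ^G$ has a graded field as homotopy by \cref{lem:homology_of_image}, and conclude that modules are free, hence wedges of even and odd suspensions. Your final paragraph supplies a justification the paper leaves implicit---namely, that a map between $e_{(H)}\KU_\QQ^G$-modules which is a $\pi_*^H$-isomorphism is already a weak equivalence of $G$-spectra, because such modules have homotopy Mackey functors supported on the single idempotent and are therefore detected at $G/H$ by \cref{GM_appendix}.
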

\begin{proof}
Let $X$ be a module spectrum over $\KU_\QQ^G$.  Then $X$ is determined by its image in $\CommAG_\QQ$, which is determined by the idempotent pieces $e_{(H)}(X)$.  Since $X$ is a module over $\KU_\QQ^G$, Barnes's splitting result \cite{barnessplitting} implies that for each conjugacy class of subgroups $(H)$, $e_{(H)}X$ is a module over $e_{(H)}\KU_\QQ^G$. The homotopy groups of $e_{(H)}\KU_\QQ^G$ are a graded field, so any module spectrum over $e_{(H)}\KU_\QQ^G$ is free, and is thus a wedge of suspensions of $e_{(H)}\KU_\QQ^G$. Since  $e_{(H)}\KU_\QQ^G$ is $2$-periodic, it's enough to consider the suspensions $e_{(H)}\KU_\QQ^G$ and $\Sigma e_{(H)}\KU_\QQ^G$.
\end{proof}

Let $\ku_\QQ^G$ denote the connective cover of $\KU_\QQ^G$.  Bott periodicity also provides the homology of the image of $\ku_\QQ^G$ in the algebraic model.
\begin{lemma}\label{homologyimageconnective}
The homology of $\theta(\ku_\QQ^G)$, the image of $\ku_\QQ^G$ in the algebraic model, is given by $V_H=0$ if $H$ is not cyclic and $V_H=\QQ(\zeta_n)[\beta]$ with $\abs{\beta}=2$ if $H$ is a cyclic group of order $n$. The action of $a\in W_GH$ on $\QQ(\zeta_n)[\beta^{\pm 1}]$ is given by $a\cdot \zeta_n = \zeta_n^{m_H(a)}$ and $a\cdot \beta=\beta$.
\end{lemma}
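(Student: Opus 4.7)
The plan is to mirror the proof of \cref{lem:homology_of_image} almost verbatim, replacing Laurent polynomials in $\beta$ with ordinary polynomials throughout, and to exploit the fact that the connective cover functor only affects the grading and not the Mackey-functor-level structure encoded in $\linepi_0$.

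First I would establish the analogue of the Mackey-functor-level setup used for $\KU_\QQ^G$. By definition, $\ku_\QQ^G$ is the connective cover of $\KU_\QQ^G$, so its homotopy Mackey functors satisfy $\linepi_j(\ku_\QQ^G)=\linepi_j(\KU_\QQ^G)$ for $j\geq 0$ and $\linepi_j(\ku_\QQ^G)=0$ for $j<0$. From the proof of \cref{lem:homology_of_image}, on each orbit $G/H$ one has $\linepi_*(\KU_\QQ^G)(G/H)\cong RU(H)[\beta^{\pm 1}]$ as a graded ring, with the Bott element $\beta_H$ behaving naturally under restriction, transfer, and Weyl-group action: $\res{H}{K}(\beta_K)=\beta_H$, $\tr{H}{K}(\beta_H)=\abs{W_KH}\beta_K$, and $a\cdot\beta_H=\beta_H$ for $a\in W_GH$. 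Passing to the connective cover simply truncates the Laurent polynomial ring to the polynomial ring, giving $\linepi_*(\ku_\QQ^G)(G/H)\cong RU(H)[\beta]$, and the formulas for restrictions, transfers, and Weyl-group actions restrict unchanged.

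Next I would carry out the idempotent splitting exactly as in \cref{lem:homology_of_image}. By \cref{quotienttransfers}, the algebraic-model piece $V_H$ is $\linepi_*(\ku_\QQ^G)(G/H)$ modulo the $\QQ[W_GH]$-submodule $t_H\linepi_*(\ku_\QQ^G)$ generated by transfers from proper subgroups. Because transfer is $\linepi_*$-linear in the Bott element (via Frobenius reciprocity and $\tr{H}{K}(x\cdot \res{H}{K}(y))=\tr{H}{K}(x)\cdot y$), the submodule of transfers in degree $2k$ is precisely $t_H\repQ^G\cdot \beta^k$. Thus the quotient computes degreewise to $V_H^{\deg 0}\cdot \beta^k$, and by \cref{homologyofrepnringmackeyfunctor} this is $0$ when $H$ is not cyclic and $\QQ(\zeta_n)\cdot\beta^k$ when $H$ is cyclic of order $n$. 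Assembling the graded pieces yields $V_H\cong \QQ(\zeta_n)[\beta]$ in the cyclic case.

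Finally I would read off the Weyl-group action. On $\QQ(\zeta_n)$ the action is $a\cdot\zeta_n=\zeta_n^{m_H(a)}$ by \cref{homologyofrepnringmackeyfunctor}, and on $\beta$ the action is trivial by the calculation in the proof of \cref{lem:homology_of_image}; these combine multiplicatively on $\QQ(\zeta_n)[\beta]$. I do not anticipate a main obstacle here: the proof of \cref{lem:homology_of_image} already does all the genuinely new work, so this lemma amounts to observing that truncating the grading is compatible with the idempotent splitting and with passage to fixed points under $W_GH$. The only mild point to verify carefully is that the submodule of transfers in each positive even degree is exactly the degree-zero submodule multiplied by $\beta^k$; this follows from Frobenius reciprocity together with $\res{H}{K}(\beta_K)=\beta_H$, both of which hold on the connective cover because they hold on $\KU_\QQ^G$ in non-negative degrees.
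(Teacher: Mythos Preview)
Your proposal is correct and follows exactly the approach the paper indicates: the paper does not give a separate proof of this lemma but simply remarks that ``Bott periodicity also provides the homology of the image of $\ku_\QQ^G$ in the algebraic model,'' meaning the argument of \cref{lem:homology_of_image} carries over with $RU(H)[\beta]$ in place of $RU(H)[\beta^{\pm 1}]$. Your explicit verification via Frobenius reciprocity that the transfer submodule in degree $2k$ is $t_H\repQ^G\cdot\beta^k$ is precisely the content of that truncation, so nothing further is needed.
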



\section{Formality: The image of $\KU_\QQ^G$ in the algebraic model}\label{sec:formality}

Our main goal is to find the image of $\KU_\QQ^G$ in the algebraic model $\CommAG_\QQ$ when $G$ is a finite abelian group. \cref{lem:homology_of_image} calculates the homology of the image $\theta(\KU_\QQ^G)$,  but in general, the homology of a rational $\CDGA$ is not enough to determine its isomorphism class in the homotopy category. In the abelian case, following \cref{strategyforcalculatingmodel}, we will show that $\theta(\KU_\QQ^G)$ is formal. That is, if $(A_\bullet)_{(H)}$ is an object of $\CommAG_\QQ$  such that $(H(A_\bullet))_{(H)}$ is isomorphic to $\pi(\phi^K\KU_\QQ^G)$ in the category $\prod \gr\QQ[W_GH]\text{-alg}$,
then there exists a zig-zag of quasi-isomorphisms of CDGAs from $(A_\bullet)_{(H)}$ to $(H(A_\bullet))_{(H)}$ where $(H(A_\bullet))_{(H)}$ is the tuple of chain complexes with zero differentials given by the homologies of the complexes $(A_\bullet)_(H)$. This will imply $(A_\bullet)_{(H)} \cong (H(A_\bullet))_{(H)}$ in the homotopy category $\Ho(\CommAG_\QQ)$.

In an effort to simplify the exposition, we prove the main formality result we want in several lemmas, which show formality of increasingly complicated $\QQ[W_GH]$-$\CDGA$s.  In the case of interest, while we know that the action of $W_GH$ is trivial on homology, we cannot assume that the action on the underlying chain complex itself is trivial.  We begin by looking at chain complexes in degree zero and add a free generator.

Our essential technique is to construct a single zig-zag of quasi-isomorphisms between a chain complex $A_\bullet$ and its homology $H(A_\bullet)$ of the form
\[\begin{tikzcd}
A_\bullet  \& D_\bullet \arrow{r}{\simeq} \arrow[swap]{l}{\simeq} \&  H(A_\bullet).
\end{tikzcd}
\]
where $D_\bullet$ is an appropriately ``free'' commutative differential graded algebra $D_\bullet \in \QXcdga{W_GH}$, given by a polynomial algebra tensored with an exterior algebra.  It is free in the sense that an algebra map out of $D_\bullet$ is determined by defining a chain map on the generators.  In the case where $H(A_\bullet)$ has trivial action, we can choose $D_\bullet$ to have trivial action.

As a warm-up and illustration of the construction, we consider the algebraic model for $\KU_\QQ$ nonequivariantly. This is the special case where $G$ is the trivial group.
\begin{lemma}\label{QBeta}
Suppose there exists $A_\bullet \in \CDGA_\QQ$ such that $H(A_\bullet) \cong \QQ[\beta^{\pm 1}]$.  Then $A_\bullet$ is formal, i.e.\ there is a zig-zag of quasi-isomorphism algebra maps
\begin{center}
\begin{tikzcd}
A_\bullet  \& D_\bullet \arrow{r}{\simeq} \arrow[swap]{l}{\simeq} \&  \QQ[\beta^{\pm 1}].
\end{tikzcd}
\end{center}
\end{lemma}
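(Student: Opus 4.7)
The plan is to build a single intermediate CDGA $D_\bullet$ that is free enough (as a graded commutative algebra) to admit CDGA maps to both sides, and that has $\QQ[\beta^{\pm 1}]$ as its homology. Concretely, I would set
\[
D_\bullet = \QQ[b, b'] \otimes \Lambda(x), \quad \abs{b}=2,\ \abs{b'}=-2,\ \abs{x}=-1, \quad dx = bb' - 1,
\]
with $d=0$ on $b, b'$. This is essentially the Koszul CDGA inverting $b$ up to homotopy: odd-degree generators square to zero by graded commutativity, so there are no other relations to worry about.

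First I would check that $H(D_\bullet) \cong \QQ[\beta^{\pm 1}]$. The even-degree cycles are all of $\QQ[b,b']$, and the image of $d$ on the odd part $\QQ[b,b'] \cdot x$ is the ideal $(bb'-1) \subset \QQ[b,b']$, giving $H^{\text{even}}(D_\bullet) \cong \QQ[b,b']/(bb'-1) \cong \QQ[b^{\pm 1}]$. In odd degrees, $d(p(b,b')\, x) = p(b,b')(bb'-1)$ vanishes only when $p=0$, so $H^{\text{odd}}(D_\bullet) = 0$. Thus the assignment $b \mapsto \beta$, $b' \mapsto \beta^{-1}$, $x \mapsto 0$ gives a well-defined CDGA map $D_\bullet \to \QQ[\beta^{\pm 1}]$ (the relation $dx = bb'-1$ is sent to $\beta\beta^{-1}-1=0$) which is a quasi-isomorphism.

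Next I would construct $D_\bullet \to A_\bullet$. Choose a cycle $b \in A_2$ representing $\beta \in H_2(A_\bullet)$ and a cycle $b' \in A_{-2}$ representing $\beta^{-1}$. Then $bb' - 1 \in A_0$ is a cycle representing $0$ in homology, so there exists $x \in A_{-1}$ with $dx = bb' - 1$. Because $D_\bullet$ is free as a graded commutative algebra on $b, b', x$, this data determines a CDGA map $D_\bullet \to A_\bullet$; on homology it sends $b \mapsto \beta$ and $b' \mapsto \beta^{-1}$, so it is surjective, and since both homologies are isomorphic to $\QQ[\beta^{\pm 1}]$, it is an isomorphism.

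The main obstacle, such as it is, lies in picking the generators so that the algebra relation imposed by $dx = bb' - 1$ is consistent: I need $bb'$ to actually be a boundary in $A_\bullet$ in order to solve for $x$, which is automatic because $[b][b'] = \beta\beta^{-1} = 1$ in $H_0$. Everything else is formal once $D_\bullet$ is in hand. This same style of construction — a small free CDGA augmented onto the target and mapped into any realization of the homology — is what I expect to iterate in the subsequent lemmas when adjoining a Galois-like field extension generator on top of $\QQ[\beta^{\pm 1}]$ and when tracking the $W_GH$-action.
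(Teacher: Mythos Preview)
Your proof is correct and essentially identical to the paper's: both build the same intermediate CDGA $D_\bullet=\QQ[\gamma,\bar\gamma]\otimes E(y)$ with $d(y)=1-\gamma\bar\gamma$, then map it to $A_\bullet$ by choosing cycle representatives for $\beta$, $\beta^{-1}$ and a primitive for $1-\alpha\bar\alpha$, and to $\QQ[\beta^{\pm1}]$ in the obvious way. The only slip is a convention mismatch---the paper works with chain complexes (differential lowers degree), so the exterior generator sits in degree $+1$ and the primitive lives in $A_1$, not $A_{-1}$; with that adjustment your argument matches the paper's verbatim.
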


\begin{proof}
Let $\alpha \in A_2$ be an element representing $\beta$ so that $[\alpha] = \beta$.  It is not necessarily the case that $\alpha$ is invertible in $A_\bullet$, so we may not be able to define a map directly from $\QQ[\beta^{\pm 1}]$ to $A_\bullet$.  Instead, we will replace $\QQ[\beta^{\pm 1}]$ with a quasi-isomorphic free commutative differential graded algebra $D_\bullet$.
Let $\bar{\alpha} \in A_{-2}$ be a class that represents $[\beta^{-1}]$.  Although it is possible that $\alpha \bar{\alpha} \neq 1$, because $\alpha$ is invertible in homology there exists $\sigma \in A_1$ such that $d(\sigma) = 1 - \alpha \bar{\alpha}$.

Define the replacement differential graded algebra $D_\bullet$ to be the polynomial algebra on classes $\gamma$ and $\bar{\gamma}$ tensored the exterior algebra on a class $y$ with $\abs{\gamma} = 2$, $\abs{\bar{\gamma}} = -2$, and $\abs{y} = 1$, where $d(\gamma) = d(\bar{\gamma}) = 0$ and $d(y) = 1 - \gamma \bar{\gamma}$.  That is, $D_\bullet = \QQ[\gamma, \bar{\gamma}] \otimes E(y)$.  To define $\firstq\colon \QQ[\gamma, \bar{\gamma}] \otimes E(y) \to A_\bullet$
we only need to specify $\firstq(\gamma)$, $\firstq(\bar{\gamma})$, and $\firstq(y)$ such that $\firstq$ is a chain map.
We can define $\firstq(\gamma) = \alpha$, $\firstq(\bar{\gamma})=\bar{\alpha}$, and $\firstq(y) = \sigma$.  Notice that $\firstq$ is a quasi-isomorphism by construction.

Finally we can define a map $\secondq\colon \QQ[\gamma, \bar{\gamma}] \otimes E(y) \to \QQ[\beta^{\pm 1}]$ by $\secondq(\gamma) = \beta$, $\secondq(\bar{\gamma}) = \beta^{-1}$ and $\secondq(y) = 0$.  This map induces an isomorphism on homology and so we have a zig-zag of quasi-isomorphism algebra maps
\begin{center}
\begin{tikzcd}
A_\bullet \& \QQ[\gamma, \bar{\gamma}] \otimes E(y) \arrow[swap]{l}{\simeq} \arrow{r}{\simeq} \& \QQ[\beta^{\pm 1}]
\end{tikzcd}
\end{center}
and hence $A_\bullet$ is formal.
\end{proof}

As an immediate corollary, this shows that as an $E_\infty$ ring nonequivariant $\KU_\QQ$ is unique.  The uniqueness of $\KU_\QQ$, and indeed of $\KU$, as an $E_\infty$ ring was shown previously by Baker and Richter in \cite{BakerRichter} using obstruction theory.

Before turning to the formality arguments in the general context of $\QQ[W_GH]$-$\CDGA$s, we make the following useful observation.

\begin{lemma}[Averaging]\label{averagingtrick}
Let $A_\bullet$ be a $\QXcdga{W_GH}$, and suppose a homology class $x\in H(A_\bullet)$ is fixed under the $W_GH$-action.  Then $x$ has a representative $a \in A_\bullet$ that is fixed under the $W_GH$-action. Similarly if $y\in A_\bullet$ such that $d(y)$ is fixed under the $W_GH$-action, then there exists a fixed element $b$ such that $d(b)=d(y)$.
\end{lemma}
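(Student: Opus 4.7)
The plan is to prove both statements using the standard averaging trick, which is available because we are working rationally and $W_GH$ is finite, so the order $\abs{W_GH}$ is invertible in $\QQ$.

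For the first statement, I would begin by choosing any representative $a_0 \in A_\bullet$ with $[a_0] = x$. A priori $a_0$ need not be $W_GH$-fixed, but since the $W_GH$-action is by chain maps, each translate $g \cdot a_0$ is also a cycle and satisfies $[g \cdot a_0] = g \cdot x = x$ by hypothesis. I would then set
\[
a = \frac{1}{\abs{W_GH}} \sum_{g \in W_GH} g \cdot a_0.
\]
This element is manifestly $W_GH$-fixed (left multiplication by any $h \in W_GH$ just permutes the summands), and its homology class equals
\[
[a] = \frac{1}{\abs{W_GH}} \sum_{g \in W_GH} [g \cdot a_0] = \frac{1}{\abs{W_GH}} \cdot \abs{W_GH} \cdot x = x,
\]
so $a$ is the desired fixed representative.

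For the second statement, I would use essentially the same averaging applied to $y$ itself. Define
\[
b = \frac{1}{\abs{W_GH}} \sum_{g \in W_GH} g \cdot y.
\]
Again $b$ is $W_GH$-fixed. Because the differential commutes with the $W_GH$-action, we have
\[
d(b) = \frac{1}{\abs{W_GH}} \sum_{g \in W_GH} d(g \cdot y) = \frac{1}{\abs{W_GH}} \sum_{g \in W_GH} g \cdot d(y),
\]
and since $d(y)$ is fixed by hypothesis each summand equals $d(y)$, giving $d(b) = d(y)$.

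There is really no obstacle here beyond verifying that the averaging operator is well defined, which is exactly where rationality is used: this argument would fail if we worked over $\ZZ$ or over a field whose characteristic divides $\abs{W_GH}$. In the arguments that apply this lemma later on, the content is simply that one can lift fixed homological data to fixed chain-level data without losing the algebraic structure.
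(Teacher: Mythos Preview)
Your proof is correct and is essentially identical to the paper's own argument: both choose an arbitrary representative, apply the averaging operator $\frac{1}{\abs{W_GH}}\sum_{g\in W_GH} g\cdot(-)$, and verify that the result is fixed and has the same homology class (respectively, the same image under $d$). Your write-up simply spells out the verifications a bit more explicitly than the paper does.
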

\begin{proof}
Choose an arbitrary cycle $a_0$ in $A_\bullet$ representing $x$. Then the element
 \[a = \mathrm{av}_{W_GH}(a_0):= \frac{1}{\abs{W_GH}}\sum_{g\in W_GH}g \cdot a_0
 \]
 is a cycle, it also represents $x$ in homology and is $W_GH$-fixed.

 For the class $y$, define $b=\mathrm{av}_{W_GH}(y)$. Since $d(y)$ is fixed and the differential is an equivariant map, $d(b)=d(y)$ and $b$ is fixed by construction.
\end{proof}

 In \cref{QZetaDegreeZero,RDegreeZero,RBeta} we often apply the averaging trick of \cref{averagingtrick} to chose fixed representatives for homology classes without further mention.

\begin{lemma}\label{QZetaDegreeZero}
Let $A_\bullet$ be a $\QQ[W_GH]$-$\CDGA$ such that its homology $H(A_\bullet)$ is isomorphic to $\QQ(\zeta_n)$ concentrated in degree zero and has trivial $W_GH$ action.  Then $A_\bullet$ is a formal as a $\QQ[W_GH]$-$\CDGA$.
\end{lemma}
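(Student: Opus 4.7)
The plan is to imitate the construction of \cref{QBeta}, producing a zig-zag $A_\bullet \xleftarrow{\simeq} D_\bullet \xrightarrow{\simeq} \QQ(\zeta_n)$ where $D_\bullet$ is an explicit ``free'' $\QQ[W_GH]$-CDGA model for the field $\QQ(\zeta_n)$. The essential point is that, although $\QQ(\zeta_n)$ is not a polynomial algebra, it is the quotient of $\QQ[x]$ by the cyclotomic polynomial $\Phi_n(x)$, and this single relation can be resolved by a one-dimensional exterior generator.

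First, since $W_GH$ acts trivially on $H_0(A_\bullet) \cong \QQ(\zeta_n)$, I would apply \cref{averagingtrick} to choose a $W_GH$-fixed representative $\alpha \in A_0$ of the class $\zeta_n$. Because $\Phi_n(\zeta_n) = 0$ in homology, the element $\Phi_n(\alpha) \in A_0$ is a boundary, and being fixed, \cref{averagingtrick} produces a $W_GH$-fixed element $\tau \in A_1$ with $d(\tau) = \Phi_n(\alpha)$. Now define
\[
D_\bullet := \QQ[\gamma] \otimes E(y), \qquad \abs{\gamma}=0,\ \abs{y}=1,\ d(\gamma)=0,\ d(y)=\Phi_n(\gamma),
\]
equipped with the trivial $W_GH$-action. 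A direct computation gives $H_0(D_\bullet) \cong \QQ[\gamma]/(\Phi_n(\gamma)) \cong \QQ(\zeta_n)$ and $H_i(D_\bullet) = 0$ for $i \neq 0$.

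I would then define two maps of $\QQ[W_GH]$-CDGAs: $\firstq\colon D_\bullet \to A_\bullet$ by $\firstq(\gamma)=\alpha$, $\firstq(y)=\tau$, and $\secondq\colon D_\bullet \to \QQ(\zeta_n)$ by $\secondq(\gamma)=\zeta_n$, $\secondq(y)=0$. Equivariance of $\firstq$ follows because both $\alpha$ and $\tau$ are $W_GH$-fixed, while equivariance of $\secondq$ is automatic since both source and target carry trivial actions. The map $H_0(\firstq)$ is a nonzero $\QQ$-algebra homomorphism between copies of the field $\QQ(\zeta_n)$ of the same finite $\QQ$-dimension, hence an isomorphism; similarly $H_0(\secondq)$ is the identity. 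Higher homology vanishes on all three complexes, so $\firstq$ and $\secondq$ are quasi-isomorphisms, proving formality.

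I do not anticipate a serious obstacle here: the only subtle point is ensuring equivariance of $\firstq$, which is handled uniformly by the averaging trick of \cref{averagingtrick} because the homology action is trivial. The real work in the paper is deferred to the subsequent lemmas (\cref{RDegreeZero} and \cref{RBeta}), where one must simultaneously treat the nontrivial Galois action on $\QQ(\zeta_n)$ (in the nonabelian case) and adjoin the Bott element $\beta$; the present lemma is the ``trivial action, degree zero'' base case of that program.
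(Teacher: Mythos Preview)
Your proposal is correct and is essentially identical to the paper's own proof: the paper also builds $D_\bullet=\QQ[t]\otimes E(z)$ with $d(z)=\Phi_n(t)$ (the Koszul complex for the single relation $\Phi_n$), picks $W_GH$-fixed lifts via \cref{averagingtrick}, and defines the same two maps. The only difference is cosmetic (variable names), and your justification that $H_0(\firstq)$ is an isomorphism is slightly more explicit than the paper's.
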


\begin{proof}
We use a standard Koszul resolution. Let $D_\bullet \in \QXcdga{W_GH}$ be the free commutative differential graded algebra with trivial $W_GH$ action $\QQ[t] \otimes E(z)$ where $\abs{t} = 0$ and $\abs{z}=1$.  Let $d(t)=0$ and $d(z) = \Phi_n(t)$.  Notice that $z$ is in an odd degree, so graded commutativity implies $z^2 = 0$.  Thus the chain complex $D_\bullet$ is
\[
0 \to \QQ[t]\{z\} \to \QQ[t] \to 0
\]
with $D_0 = \QQ[t]$, $D_1 = \QQ[t]\{z\}$, and all other $D_k = 0$.  By construction $D_\bullet$ has homology $\QQ(\zeta_n)$ concentrated in degree zero with trivial action.

To define a map $\firstq\colon D_\bullet \to A_\bullet$, choose a class $a \in A_0$ that represents $\zeta_n$ so $[a] = \zeta_n$. Now $\zeta_n$ is a root of $\Phi_n(x)$ but it is possible that $a$ is not a root in $A_0$. However $\Phi_n(a)$ must be a boundary, so there exists a class $\rho$ such that $d(\rho) = \Phi_n(a)$. By \cref{averagingtrick}, we may assume $a$ is fixed  under $W_GH$. Since $a$ is fixed, the polynomial $\Phi_n(a)$ is fixed, and so we can also assume $\rho$ is fixed. Thus we may define $\firstq(t) = a$ and $\firstq(z) = \rho$. Notice the map $\firstq$ is now a quasi-isomorphism. 

We may easily define a quasi-isomorphism $\secondq\colon D_\bullet \to \QQ(\zeta_n)$ by $\secondq(a) = \zeta_n$ and $\secondq(z) =0$.  Thus we have a zig-zag of quasi-isomorphisms
\begin{center}
\begin{tikzcd}
A_\bullet \& D_\bullet \arrow[swap]{l}{\simeq} \arrow{r}{\simeq} \& \QQ(\zeta_n),
\end{tikzcd}
\end{center}
which completes the proof that $A_\bullet$ is formal.
\end{proof}

\cref{QZetaDegreeZero} applies to the homology of the representation ring Mackey functor, which shows that $\theta(H\repQ^G)$ is weakly equivalent to the homology specified in \cref{homologyofrepnringmackeyfunctor}.
\begin{cor}
For $G$ abelian, the image of $H\repQ^G$ is unique in $\CommAG_\QQ$.
\end{cor}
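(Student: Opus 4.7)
The plan is to assemble \cref{homologyofrepnringmackeyfunctor} and \cref{QZetaDegreeZero} componentwise in the product decomposition of $\CommAG_\QQ$, using the hypothesis that $G$ is abelian to trivialize the Weyl group actions.

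First, I would invoke \cref{rem:trivAction}: when $G$ is abelian, the conjugation homomorphism $m_H\colon W_GH \to (\ZZ/n)^\times$ of \cref{defnofactionofweylgroupfunction} is trivial, so the $W_GH$-action on the $\QQ(\zeta_n)$ appearing in \cref{homologyofrepnringmackeyfunctor} is trivial. Therefore, for each conjugacy class $(H) \leq G$, the homology of the CDGA $\theta(H\repQ^G)_{(H)}$ is concentrated in degree zero and equals either $0$ (when $H$ is not cyclic) or $\QQ(\zeta_n)$ with trivial $W_GH$-action (when $H$ is cyclic of order $n$).

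Next, I would apply \cref{QZetaDegreeZero} to each cyclic component to produce a zig-zag of quasi-isomorphisms of $\QQ[W_GH]$-CDGAs from $\theta(H\repQ^G)_{(H)}$ to $\QQ(\zeta_n)$ in degree zero. For non-cyclic components, the CDGA is quasi-isomorphic to $0$ trivially. Since $\CommAG_\QQ \cong \prod_{(H) \leq G} \QXcdga{W_GH}$ is a product model category with weak equivalences created pointwise, these componentwise quasi-isomorphisms assemble into a zig-zag of weak equivalences in $\CommAG_\QQ$ between $\theta(H\repQ^G)$ and the tuple $(V_H)_{(H) \leq G}$ described in \cref{homologyofrepnringmackeyfunctor}. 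This pins down the image up to weak equivalence, which is the content of \emph{uniqueness} here.

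The substantive work has already been done in establishing \cref{homologyofrepnringmackeyfunctor} and \cref{QZetaDegreeZero}; the corollary itself is a straightforward assembly step, so I do not anticipate any obstacle beyond verifying that the formality zig-zags can be run in parallel across conjugacy classes, which is automatic from the product structure of $\CommAG_\QQ$.
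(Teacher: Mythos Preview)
Your proposal is correct and follows essentially the same approach as the paper: establish formality componentwise via \cref{QZetaDegreeZero} (using the abelian hypothesis to trivialize the Weyl actions, as in \cref{rem:trivAction}), then assemble across conjugacy classes using the product structure of $\CommAG_\QQ$. The paper's own proof is terser and cites \cref{RBeta} for the formality step, but the argument is the same---indeed your citation of \cref{QZetaDegreeZero} is the more precise one for $H\repQ^G$, whose homology has no Bott class.
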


\begin{proof}
By Lemma \ref{RBeta} we have formality of the algebraic model $\theta(H\repQ^G)$ at each $(H)$.  The values of the algebraic model at each conjugacy class of subgroup are independent so the image of $H\repQ^G$ is unique up to equivariant quasi-isomorphism in $\CommAG_\QQ$.
\end{proof}

More generally, if $R$ is the quotient of a polynomial $\QQ$-algebra by a regular sequence, viewed as a $\QQ[W_GH]$-algebra with trivial action, then $R[\beta^{\pm 1}]$ is formal. We prove this in two steps.  The first generalizes \cref{QZetaDegreeZero}.

\begin{lemma}\label{RDegreeZero} Let $R$ be the quotient of a finitely generated polynomial algebra over $\QQ$ by a finite regular sequence and let $A_\bullet \in \QXcdga{W_GH}$ have homology $H(A_\bullet) \cong R$ concentrated in degree zero with trivial $W_GH$ action.  Then $A_\bullet$ is formal.
\end{lemma}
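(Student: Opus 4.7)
The plan is to adapt the Koszul resolution argument of \cref{QZetaDegreeZero} from a single cyclotomic polynomial to a regular sequence. Write $R = \QQ[t_1, \ldots, t_m]/(f_1, \ldots, f_k)$ where $(f_1, \ldots, f_k)$ is the given regular sequence. First I would define the replacement CDGA $D_\bullet \in \QXcdga{W_GH}$ to be $\QQ[t_1, \ldots, t_m] \otimes E(z_1, \ldots, z_k)$ with $\abs{t_i} = 0$, $\abs{z_j} = 1$, $d(t_i) = 0$, $d(z_j) = f_j(t_1, \ldots, t_m)$, and trivial $W_GH$-action. By the classical Koszul complex theorem, regularity of $(f_1, \ldots, f_k)$ ensures that $H(D_\bullet) \cong R$ is concentrated in degree zero.

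Next I would construct a zig-zag of equivariant quasi-isomorphisms of CDGAs $A_\bullet \xleftarrow{\firstq} D_\bullet \xrightarrow{\secondq} R$. For $\firstq$, choose a representative $a_i \in A_0$ of the class $t_i \in H_0(A_\bullet) \cong R$ for each $i$, and apply the averaging trick (\cref{averagingtrick}) to replace each $a_i$ by a $W_GH$-fixed representative. Because $f_j$ has rational coefficients and the $a_i$ are fixed, $f_j(a_1, \ldots, a_m)$ is fixed; since it represents $0 \in R$, it is a boundary, and a second application of \cref{averagingtrick} lets us choose fixed elements $\rho_j \in A_1$ with $d(\rho_j) = f_j(a_1, \ldots, a_m)$. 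Setting $\firstq(t_i) = a_i$ and $\firstq(z_j) = \rho_j$ extends to an equivariant CDGA map since $D_\bullet$ is free as a graded commutative algebra on these generators. Define $\secondq$ by $\secondq(t_i) = t_i$ and $\secondq(z_j) = 0$; this is a well-defined CDGA map because $f_j(t_1,\ldots,t_m) = 0$ in $R$, and it is equivariant because both sides have trivial action.

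To verify both maps are quasi-isomorphisms, note that on $H_0$ each sends the classes of the generators $t_i$ to the corresponding generators of $R$, hence induces the identity isomorphism on $R$; in every other degree all three of $H(A_\bullet)$, $H(D_\bullet)$, and $R$ vanish, so the maps are trivially isomorphisms there. This yields the desired formality zig-zag in $\QXcdga{W_GH}$.

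The main technical ingredient is the regularity of $(f_1, \ldots, f_k)$, which is what makes the Koszul complex $D_\bullet$ a resolution of $R$. Beyond invoking this classical fact, I do not anticipate a serious obstacle: the triviality of the $W_GH$-action on $R$ allows $D_\bullet$ to carry trivial action, so equivariance of $\firstq$ reduces entirely to the existence of fixed representatives provided by averaging, and the freeness of $D_\bullet$ as a graded commutative algebra means both maps are determined by their values on the finite generating set $\{t_i, z_j\}$.
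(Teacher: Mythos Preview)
Your proposal is correct and follows essentially the same approach as the paper: build the Koszul complex $D_\bullet=\QQ[t_1,\dots,t_m]\otimes E(z_1,\dots,z_k)$ with trivial $W_GH$-action, use \cref{averagingtrick} to choose fixed representatives $a_i$ and fixed bounding elements $\rho_j$ in $A_\bullet$, and obtain the zig-zag $A_\bullet\xleftarrow{\firstq}D_\bullet\xrightarrow{\secondq}R$. The only differences are notational (your $m,k$ versus the paper's $n,m$) and that you spell out the $H_0$-check for the quasi-isomorphisms a bit more explicitly than the paper does.
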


\begin{proof}
By assumption,  $R \cong \QQ[x_1, \dots, x_n]/I$ where $I$ is an ideal generated by a regular sequence of finitely many polynomials $I = (f_1, \dots, f_m)$.
Let $a_1, \dots, a_n \in A_0$ be $W_GH$-fixed elements representing the homology classes $x_1, \dots, x_n$ and choose $b_1, \dots, b_m \in A_1$ also $W_GH$-fixed such that $d(b_i) = f_i(a_1, \dots, a_n)$.  Define $D_\bullet \in \QXcdga{W_GH}$ to be the free $\CDGA$ with trivial action given by
\[D_\bullet = \QQ[t_1, \dots, t_n] \otimes E(z_1, \dots, z_m)\]
where $\abs{t_i} = 0$, $\abs{z_i}=1$, and $d(z_i) = f_i(t_1, \dots, t_n)$.  That is, $D_\bullet$ is the Koszul complex for the regular sequence $(f_1,\dots,f_m)$.  By the regularity of the sequence $(f_1,\dots,f_m)$, the homology  $H(D_\bullet)$ is isomorphic to $R$ concentrated in degree zero.

Now define $\firstq\colon D_\bullet \to A_\bullet$ by $\firstq(t_i) = a_i$ and $\firstq(z_i)=b_i$; again, this map is equivariant by our choices of $W_GH$-fixed representatives $a_i$ and $b_i$.  The map $\firstq$ is a quasi-isomorphism.  Define $\secondq\colon D_\bullet \to R$ by $\secondq(t_i) = x_i$ and $\secondq(z_i)=0$.  This is also a quasi-isomorphism.  Thus we have constructed a zig-zag of quasi-isomorphisms
\begin{center}
\begin{tikzcd}
A_\bullet \& D_\bullet \arrow[swap]{l}{\simeq} \arrow{r}{\simeq} \& R
\end{tikzcd}
\end{center}
and hence $A_\bullet$ is formal.
\end{proof}

Now we generalize the technique of Lemma \ref{QBeta} to incorporate the invertible class $\beta$.

\begin{lemma}\label{RBeta}
Let $A_\bullet \in \QXcdga{W_GH}$ with $H(A_\bullet) \cong R[\beta^{\pm 1}]$ where $R$ is a the quotient of a finitely generated polynomial algebra over $\QQ$ by a regular sequence, $\abs{\beta} = 2$ and where $R$ and $\beta$ have trivial $W_GH$-action.  Then $A_\bullet$ is formal.
\end{lemma}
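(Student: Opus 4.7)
The plan is to combine the techniques of \cref{QBeta} and \cref{RDegreeZero} by building a single free $\QXcdga{W_GH}$ with trivial $W_GH$-action that simultaneously resolves the regular-sequence relations of $R$ and inverts the Bott-like class $\beta$.

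First I would write $R = \QQ[x_1,\dots,x_n]/(f_1,\dots,f_m)$ for a regular sequence $(f_1,\dots,f_m)$. Applying the averaging trick of \cref{averagingtrick}, I would choose $W_GH$-fixed cycles $a_i \in A_0$ representing $x_i$, a fixed cycle $\alpha \in A_2$ representing $\beta$, and a fixed cycle $\bar{\alpha} \in A_{-2}$ representing $\beta^{-1}$. Since $f_i(x_1,\dots,x_n)=0$ and $\beta \cdot \beta^{-1}=1$ in homology, the elements $f_i(a_1,\dots,a_n)$ and $1-\alpha\bar{\alpha}$ are boundaries; averaging again, I choose fixed elements $b_i \in A_1$ with $d(b_i)=f_i(a_1,\dots,a_n)$ and fixed $\sigma \in A_1$ with $d(\sigma)=1-\alpha\bar{\alpha}$.

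Next I would define the intermediate object
\[
D_\bullet = \QQ[t_1,\dots,t_n,\gamma,\bar{\gamma}] \otimes E(z_1,\dots,z_m,y)
\]
in $\QXcdga{W_GH}$ with trivial $W_GH$-action, where $\abs{t_i}=0$, $\abs{\gamma}=2$, $\abs{\bar{\gamma}}=-2$, $\abs{z_i}=1$, $\abs{y}=1$, and the differential is specified by $d(t_i)=d(\gamma)=d(\bar{\gamma})=0$, $d(z_i)=f_i(t_1,\dots,t_n)$, and $d(y)=1-\gamma\bar{\gamma}$. This is the Koszul complex on the sequence $(f_1,\dots,f_m,\,1-\gamma\bar{\gamma})$ in the polynomial ring $\QQ[t_1,\dots,t_n,\gamma,\bar{\gamma}]$; since the $f_i$ involve only the $t_j$ and form a regular sequence there, and $1-\gamma\bar{\gamma}$ is a non-zero-divisor in $R[\gamma,\bar{\gamma}]$, the full sequence is regular and hence $H(D_\bullet) \cong R[\gamma,\gamma^{-1}] \cong R[\beta^{\pm 1}]$ concentrated in degree zero of the Koszul direction.

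Finally I would define the two algebra maps by extending freely from generators. The quasi-isomorphism $\firstq\colon D_\bullet \to A_\bullet$ sends $t_i \mapsto a_i$, $\gamma \mapsto \alpha$, $\bar{\gamma}\mapsto \bar{\alpha}$, $z_i \mapsto b_i$, $y \mapsto \sigma$; this is a well-defined $W_GH$-equivariant chain map precisely because each image is fixed, and it is a quasi-isomorphism because it induces the identity on $R[\beta^{\pm 1}]$. The quasi-isomorphism $\secondq\colon D_\bullet \to R[\beta^{\pm 1}]$ sends $t_i \mapsto x_i$, $\gamma \mapsto \beta$, $\bar{\gamma}\mapsto \beta^{-1}$, $z_i \mapsto 0$, $y \mapsto 0$, giving the zig-zag
\begin{center}
\begin{tikzcd}
A_\bullet \& D_\bullet \arrow[swap]{l}{\simeq} \arrow{r}{\simeq} \& R[\beta^{\pm 1}].
\end{tikzcd}
\end{center}
The main (but mild) obstacle is the homology computation for $D_\bullet$, namely verifying that the extended sequence remains regular so that the Koszul homology is as claimed; everything else follows by the same bookkeeping as in \cref{QBeta} and \cref{RDegreeZero}.
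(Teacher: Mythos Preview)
Your proposal is correct and is essentially the paper's proof: your intermediate object $D_\bullet$ is exactly the paper's $D_\bullet\otimes \QQ[\gamma,\bar\gamma]\otimes E(y)$ (just written as a single Koszul complex rather than a tensor product), and your maps $\firstq$ and $\secondq$ agree with the paper's $\bar\firstq$ and $\bar\secondq$ on the nose. The only cosmetic difference is that the paper computes $H(D_\bullet)$ via the K\"unneth theorem over $\QQ$ rather than by verifying that $(f_1,\dots,f_m,1-\gamma\bar\gamma)$ is regular, which sidesteps the ``mild obstacle'' you mention.
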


\begin{proof}
 As in the proof of \cref{RDegreeZero}, let $D_\bullet\cong \QQ[t_1,\dots,t_n]\otimes E(z_1,\dots,z_m)$ be the Koszul complex so that $H(D_\bullet)\cong R$.  To adjoin the invertible class $\beta^{\pm 1}$, we tensor $D_\bullet$ with the chain complex $\QQ[\gamma,\bar\gamma]\otimes E(y)$ constructed in \cref{QBeta}. Since we are working over a field, the K\"unneth theorem for chain complexes implies that there are isomorphisms on homology
\[H(D_\bullet \otimes \QQ[\gamma,\bar{\gamma}]\otimes E(y))\cong H(D_\bullet)\otimes H(\QQ[\gamma,\bar{\gamma}]\otimes E(y))\cong R\otimes \QQ[\beta^{\pm 1}].\]
Let $\alpha \in A_2$ be a $W_GH$-fixed representative of $\beta$ and $\bar{\alpha} \in A_{-2}$ be a $W_GH$-fixed representative of $\beta^{-1}$.
It is possible that $\alpha \bar{\alpha} \neq 1$ in $A_\bullet$ but since $[\alpha][\bar{\alpha}] =1$ in homology, there exists a $W_GH$-fixed element $c \in A_1$ such that $d(c) = 1 - \alpha \bar{\alpha}$.
We extend $\firstq$ from the previous lemma to $\bar{\firstq}\colon D_\bullet \otimes \QQ[\gamma, \bar{\gamma}] \otimes E(y) \to A_\bullet$ via $\bar{\firstq}(\gamma) = \alpha$, $\bar{\firstq}(\bar{\gamma})=\bar{\alpha}$, and $\bar{\firstq}(y) = c$.  We also extend the map $\secondq$ from \cref{RDegreeZero} to
$\bar{\secondq}\colon D_\bullet \otimes \QQ[\gamma, \bar{\gamma}] \otimes E(y) \to R[\beta^{\pm 1}]$ via $\bar{\secondq}(\gamma) = \beta$, $\bar{\secondq}(\bar{\gamma}) = \beta^{-1}$ and $\bar{\secondq}(y)=0$.  Now $\bar{\firstq}$ and $\bar{\secondq}$ define a zig-zag of quasi-isomorphisms
\begin{center}
\begin{tikzcd}
A_\bullet \& D_\bullet \otimes \QQ[\gamma, \bar{\gamma}] \otimes E(y) \arrow[swap]{l}{\simeq} \arrow{r}{\simeq} \& R[\beta^{\pm 1}]
\end{tikzcd}
\end{center}
and hence $A_\bullet$ is formal.
\end{proof}

Notice this last lemma would also hold with $\abs{\beta} = 2n$, adjusting the degrees of $\gamma$ and $\bar{\gamma}$ appropriately.

As discussed in \cref{rem:trivAction}, when $G$ is an abelian group, the actions on the homology of $\theta(\KU_\QQ^G)$ are trivial.  Hence \cref{RBeta} applies to show that $\theta(\KU_\QQ^G)$ is formal for an abelian group $G$.
\begin{lemma}\label{lem:formality}
Let $G$ be a finite abelian group. Then $\theta(\KU_\QQ^G)$ is formal in $\CommAG_\QQ$.
\end{lemma}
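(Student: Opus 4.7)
The plan is to leverage the product decomposition of $\CommAG_\QQ$ and reduce to the component-wise formality results already established in \cref{QZetaDegreeZero,RDegreeZero,RBeta}. Since $\CommAG_\QQ \cong \prod_{(H)\leq G} \QXcdga{W_GH}$, formality of $\theta(\KU_\QQ^G)$ reduces to showing that each component $\theta(\KU_\QQ^G)_{(H)}$ is formal as a $\QQ[W_GH]$-CDGA. For $G$ abelian, every conjugacy class $(H)$ consists of a single subgroup.

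The first step is to dispense with the non-cyclic subgroups: by \cref{lem:homology_of_image}, when $H$ is not cyclic the homology $V_H$ vanishes, so the component $\theta(\KU_\QQ^G)_{(H)}$ is acyclic. Any acyclic $\QQ[W_GH]$-CDGA is quasi-isomorphic to the zero object, hence formal.

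For cyclic $H$ of order $n$, \cref{lem:homology_of_image} gives $H(\theta(\KU_\QQ^G)_{(H)}) \cong \QQ(\zeta_n)[\beta^{\pm 1}]$ with $|\beta|=2$. By \cref{rem:trivAction}, since $G$ is abelian the $W_GH$-action on this homology is trivial. To apply \cref{RBeta} I need to exhibit $\QQ(\zeta_n)$ as the quotient of a finitely generated polynomial $\QQ$-algebra by a regular sequence. This is immediate: $\QQ(\zeta_n) \cong \QQ[x]/(\Phi_n(x))$, where $\Phi_n$ is the $n$-th cyclotomic polynomial, and a single nonzero element of the domain $\QQ[x]$ is a regular sequence of length one. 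Thus $R = \QQ(\zeta_n)$ satisfies the hypotheses of \cref{RBeta}, and applying that lemma to $A_\bullet = \theta(\KU_\QQ^G)_{(H)}$ yields the desired zig-zag of equivariant quasi-isomorphisms
\[
\theta(\KU_\QQ^G)_{(H)} \xleftarrow{\simeq} \QQ[t,\gamma,\bar\gamma] \otimes E(z,y) \xrightarrow{\simeq} \QQ(\zeta_n)[\beta^{\pm 1}]
\]
in $\QXcdga{W_GH}$.

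I do not expect a serious obstacle here, since the nontrivial work has already been carried out: \cref{lem:homology_of_image} identifies the homology, \cref{rem:trivAction} handles the action in the abelian case, and \cref{RBeta} packages the Koszul-resolution construction that does the actual formality argument. The one small point to verify carefully is that the hypotheses of \cref{RBeta} match exactly—in particular that the trivial-action assumption is genuinely available, which is where abelianness of $G$ is essential. Assembling the componentwise equivalences into a single equivalence in the product category $\CommAG_\QQ$ then completes the proof.
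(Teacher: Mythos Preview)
Your proposal is correct and follows essentially the same approach as the paper: reduce to the components of the product category $\CommAG_\QQ$ and invoke \cref{RBeta} at each cyclic subgroup, using the triviality of the Weyl-group action in the abelian case. You are in fact more explicit than the paper's own proof, which simply asserts that \cref{RBeta} gives formality at each $(H)$ without separately treating the acyclic non-cyclic components or spelling out that $\QQ(\zeta_n)\cong\QQ[x]/(\Phi_n(x))$ is a regular-sequence quotient.
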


\begin{proof} In Lemma \ref{RBeta} we have shown formality of the algebraic model $\theta(\KU_\QQ^G)$ at each conjugacy class of subgroups $(H)$.  As above, the values of the algebraic model at each conjugacy class of subgroup are independent so $\theta(\KU_\QQ^G)$ is formal.
\end{proof}

\begin{thrm}\label{thm:main} Let $G$ be a finite abelian group. The image of periodic $K$-theory $\KU_\QQ^G$ in the algebraic model is given by
$\QQ(\zeta_n)[\beta^{\pm 1}]$ with $\abs{\beta}=2$ with trivial action of the Weyl group for each cyclic subgroup $C_n \leq G$ and is zero for non-cyclic subgroups.
\end{thrm}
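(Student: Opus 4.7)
The plan is to invoke \cref{strategyforcalculatingmodel}, combining the homology-level calculation already completed in \cref{sec:calculations} with a formality argument for each idempotent piece. Since $\CommAG_\QQ$ is a product of categories $\QXcdga{W_GH}$ indexed over conjugacy classes of subgroups, and for $G$ abelian each subgroup is its own conjugacy class with $W_GH = G/H$, it suffices to analyze the image $(H)$-coordinate by $(H)$-coordinate.

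First, I would recall from \cref{lem:homology_of_image} that the homology of $\theta(\KU_\QQ^G)$ at the subgroup $H$ is $V_H = 0$ if $H$ is not cyclic, and $V_H \cong \QQ(\zeta_n)[\beta^{\pm 1}]$ with $\abs{\beta} = 2$ if $H$ is cyclic of order $n$. For the non-cyclic subgroups there is nothing further to do: a $\CDGA$ with vanishing homology is quasi-isomorphic to $0$, so this coordinate of $\theta(\KU_\QQ^G)$ is weakly equivalent to $0$ in $\QXcdga{W_GH}$. For the cyclic subgroups, I would next observe, as in \cref{rem:trivAction}, that the hypothesis of abelianness forces the function $m_H$ from \cref{defnofactionofweylgroupfunction} to be trivial, so the Weyl group action on $V_H = \QQ(\zeta_n)[\beta^{\pm 1}]$ is trivial.

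With the action now known to be trivial, I would apply the formality result of \cref{lem:formality} (itself built on \cref{RBeta}) coordinate by coordinate. Concretely, taking $R = \QQ(\zeta_n) = \QQ[t]/\Phi_n(t)$ as the quotient of a polynomial ring by the regular element $\Phi_n(t)$, \cref{RBeta} produces, for each cyclic subgroup $H$ of order $n$, a zig-zag of equivariant quasi-isomorphisms of $\QQ[W_GH]$-$\CDGA$s between the $(H)$-coordinate of $\theta(\KU_\QQ^G)$ and $\QQ(\zeta_n)[\beta^{\pm 1}]$ equipped with the trivial action. Assembling these weak equivalences across all conjugacy classes of subgroups of $G$ gives the required weak equivalence in $\CommAG_\QQ$.

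The main obstacle in this program has already been handled in \cref{sec:formality}: showing that the underlying $\QQ[W_GH]$-$\CDGA$ is determined by its (algebra-plus-action) homology in spite of the fact that the action on the underlying chain complex need not be trivial. That is where the averaging trick of \cref{averagingtrick} is essential, since it permits $W_GH$-fixed representatives of fixed homology classes to be selected, so that the Koszul-style free resolution $D_\bullet = \QQ[t,\gamma,\bar\gamma] \otimes E(z, y)$ can be mapped equivariantly into $A_\bullet$. Given that this work is already in place, the proof of \cref{thm:main} reduces to packaging these pieces: the abelianness hypothesis trivializes the action on homology, \cref{RBeta} supplies formality, and the product structure of $\CommAG_\QQ$ assembles the coordinate-wise weak equivalences into the claimed identification.
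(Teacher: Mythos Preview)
Your proposal is correct and follows essentially the same approach as the paper: the paper's proof simply cites \cref{lem:homology_of_image} for the homology calculation and \cref{lem:formality} for formality, and your write-up is an expanded, coordinate-by-coordinate unpacking of exactly those two ingredients together with \cref{strategyforcalculatingmodel}.
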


\begin{proof} This follows from Lemma \ref{lem:homology_of_image} and Lemma \ref{lem:formality}.
\end{proof}

A similar formality argument as above together with calculations from Section \ref{sec:calculations} show the following result.

\begin{thm}\label{thrm:connectivemain}
When $G$ is a finite abelian group, the image of connective $K$-theory $\ku_\QQ^G$ in the algebraic model is given by $\QQ(\zeta_n)[\beta]$ with $\abs{\beta} =2$ for each $C_n \leq G$ and zero otherwise. The action of the Weyl group $W_G(C_n)$ on $\QQ(\zeta_n)$ is trivial.
\end{thm}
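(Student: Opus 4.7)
The plan is to follow the same strategy used to prove Theorem \ref{thm:main}, namely \cref{strategyforcalculatingmodel}: combine the homology computation of $\theta(\ku_\QQ^G)$ already recorded in \cref{homologyimageconnective} with a formality result showing that any $\QQ[W_GH]$-CDGA whose homology matches is quasi-isomorphic to that homology. Since \cref{homologyimageconnective} identifies the relevant homology as $\QQ(\zeta_n)[\beta]$ with $\abs{\beta}=2$ and trivial Weyl-group action (the triviality of the action being automatic in the abelian setting by \cref{rem:trivAction}), all the input data is in place; only the formality step requires fresh work.

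First I would establish a connective analogue of \cref{RBeta}: if $A_\bullet \in \QXcdga{W_GH}$ has $H(A_\bullet) \cong R[\beta]$ with $\abs{\beta}=2$, where $R$ is the quotient of a finitely generated polynomial $\QQ$-algebra by a regular sequence and where both $R$ and $\beta$ have trivial $W_GH$-action, then $A_\bullet$ is formal. In our application $R = \QQ(\zeta_n) \cong \QQ[t]/\Phi_n(t)$, so this regular sequence hypothesis is satisfied. The construction is strictly simpler than in the Laurent case: I would take
\[
D_\bullet \;=\; \bigl(\QQ[t_1,\dots,t_n]\otimes E(z_1,\dots,z_m)\bigr)\otimes \QQ[\gamma],
\]
a Koszul complex for the regular sequence defining $R$ tensored with a free polynomial algebra on a degree $2$ generator $\gamma$, all equipped with the trivial $W_GH$-action. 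By the K\"unneth theorem, $H(D_\bullet) \cong R[\gamma]$.

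To build the zig-zag, I would choose $W_GH$-fixed representatives $a_i, b_i, \alpha \in A_\bullet$ of the generators $x_i, f_i, \beta$ exactly as in the proof of \cref{RBeta}, using the averaging trick of \cref{averagingtrick} together with the triviality of the action on homology. Then the algebra maps $\bar\firstq \colon D_\bullet \to A_\bullet$ and $\bar\secondq \colon D_\bullet \to R[\beta]$ determined by $\bar\firstq(t_i)=a_i,\ \bar\firstq(z_i)=b_i,\ \bar\firstq(\gamma)=\alpha$ and $\bar\secondq(t_i)=x_i,\ \bar\secondq(z_i)=0,\ \bar\secondq(\gamma)=\beta$ are each equivariant quasi-isomorphisms, witnessing formality. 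Notably, no analogue of the auxiliary class $y$ from \cref{QBeta} and \cref{RBeta} is needed, because without inverting $\beta$ there is no relation $\gamma\bar\gamma=1$ to resolve; this is the one place where the connective case is genuinely easier than the periodic one.

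The remaining obstacle is mild: one needs to verify that the image of $\ku_\QQ^G$ in each product factor of $\CommAG_\QQ$ is really a CDGA to which the lemma applies, and that the independence of the factors in $\CommAG_\QQ$ lets us assemble the factorwise formality into formality of the whole sequence, exactly as in \cref{lem:formality}. Combining the formality statement with \cref{homologyimageconnective} then yields the stated description of $\theta(\ku_\QQ^G)$, completing the proof.
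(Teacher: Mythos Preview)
Your proposal is correct and matches the paper's own approach: the paper explicitly says only that ``a similar formality argument as above together with calculations from Section~\ref{sec:calculations}'' yields the result, and you have accurately filled in those details, including the observation that the connective case is simpler because no auxiliary class $y$ (and no $\bar\gamma$) is needed in the absence of the relation $\gamma\bar\gamma=1$. One minor wording slip: $b_i$ is not a representative of $f_i$ in homology but rather a class with $d(b_i)=f_i(a_1,\dots,a_n)$, as in the proof of \cref{RDegreeZero}; your intent is clear and the argument goes through.
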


\begin{cor}\label{uniquenaiveEinftystructure} $\KU_\QQ^G$ and $\ku_\QQ^G$ admit unique structures as naive commutative $G$-ring spectra, i.e.,~ as naive $E_\infty$-algebras in $G$-spectra.  That is, if $X$ is a rational naive-commutative $G$-ring spectra whose graded Green functor of homotopy groups is isomorphic to that of $\KU_\QQ^G$ or $\ku_\QQ^G$, then there is a weak equivalence of rational naive-commutative $G$-ring spectra between $X$ and $\KU_\QQ^G$ or $\ku_\QQ^G$ (respectively).
\end{cor}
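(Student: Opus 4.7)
The plan is to transfer the problem to the algebraic model via the Quillen equivalence (\ref{equivgivingnaivecommutativemodel}), apply the formality results for $\theta(\KU_\QQ^G)$ and $\theta(\ku_\QQ^G)$ proved in the previous section, and then invoke the Quillen equivalence again to pull the resulting weak equivalence back to rational naive-commutative $G$-ring spectra.

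First, suppose $X$ is a rational naive-commutative $G$-ring spectrum with $\underline{\pi}_*(X) \cong \underline{\pi}_*(\KU_\QQ^G)$ as graded Green functors (the $\ku_\QQ^G$ case is identical). Using the idempotent splitting recalled in Section \ref{sec:idempotents_splittings} and the identification (\ref{homologyofimagehomotopygroupsofgeomfp}) between homology of the image in the algebraic model and homotopy of geometric fixed points, the assumption on $\underline{\pi}_*(X)$ implies that for each conjugacy class $(H)\leq G$,
\[
H_*\bigl(\theta(X)_{(H)}\bigr) \;\cong\; e_H \underline{\pi}_*(X)(G/H) \;\cong\; e_H \underline{\pi}_*(\KU_\QQ^G)(G/H) \;\cong\; H_*\bigl(\theta(\KU_\QQ^G)_{(H)}\bigr)
\]
as graded $\QQ[W_GH]$-algebras. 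By \cref{lem:homology_of_image}, this common homology is $0$ when $H$ is non-cyclic and $\QQ(\zeta_n)[\beta^{\pm 1}]$ with trivial $W_GH$-action when $H$ is cyclic of order $n$ (here we use that $G$ is abelian, via \cref{rem:trivAction}).

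Now apply the formality machinery of Section \ref{sec:formality}. The hypotheses of \cref{RBeta} are satisfied by $\theta(X)_{(H)}$ for every conjugacy class $(H)$, since $\QQ(\zeta_n) = \QQ[t]/(\Phi_n(t))$ is a quotient of a polynomial $\QQ$-algebra by a (single-element, hence trivially) regular sequence, and $\beta$ has trivial action. Therefore each coordinate $\theta(X)_{(H)}$ is connected by a zig-zag of quasi-isomorphisms in $\QXcdga{W_GH}$ to its homology, and likewise for $\theta(\KU_\QQ^G)_{(H)}$. Assembling these zig-zags coordinate-by-coordinate (which is allowed because $\CommAG_\QQ$ is a product) yields a zig-zag of weak equivalences $\theta(X) \simeq \theta(\KU_\QQ^G)$ in $\CommAG_\QQ$. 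The $\ku_\QQ^G$ case is the same, using \cref{homologyimageconnective} and the polynomial-not-Laurent-polynomial version of \cref{RBeta} (where one simply drops the generators $\bar\gamma$ and $y$ from the Koszul-type resolution).

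Finally, since the zig-zag of symmetric monoidal Quillen equivalences (\ref{equivgivingnaivecommutativemodel}) induces an equivalence of homotopy categories $\Ho(\Comm_{\naive}(G\Sp_\QQ)) \simeq \Ho(\CommAG_\QQ)$, a weak equivalence $\theta(X) \simeq \theta(\KU_\QQ^G)$ in the algebraic model lifts to a weak equivalence $X \simeq \KU_\QQ^G$ in rational naive-commutative $G$-ring spectra, and similarly for $\ku_\QQ^G$. The main conceptual content is already contained in \cref{thm:main} and \cref{thrm:connectivemain}; there is no serious obstacle here beyond verifying that the formality argument depends only on the isomorphism class of the homology as a graded $\QQ[W_GH]$-algebra, which is exactly what \cref{RBeta} delivers.
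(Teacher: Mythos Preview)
Your proposal is correct and follows essentially the same approach as the paper's proof: use the assumed isomorphism of homotopy Green functors together with (\ref{homologyofimagehomotopygroupsofgeomfp}) to see that $\theta(X)$ and $\theta(\KU_\QQ^G)$ (or $\theta(\ku_\QQ^G)$) have isomorphic homology in $\CommAG_\QQ$, invoke the formality results of Section~\ref{sec:formality} to obtain a zig-zag of quasi-isomorphisms between them, and then use the Quillen equivalence (\ref{equivgivingnaivecommutativemodel}) to transport this back to a weak equivalence of naive-commutative ring $G$-spectra. Your write-up is somewhat more explicit than the paper's---spelling out why $\QQ(\zeta_n)$ satisfies the hypotheses of \cref{RBeta} and noting the coordinatewise assembly in the product model category---but the logical content is the same.
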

\begin{proof}
For concreteness, suppose $X$ is a naive-commutative rational $G$-spectrum and the graded commutative homotopy group Green functor of $X$  is isomorphic to that of $\KU_\QQ^G$.  Let $\theta(X)$ be the image of $X$ in $\CommAG_\QQ$. Then the homology of $\theta(X)$ is isomorphic to the homology of $\theta(\KU_\QQ^G)$; since we know the latter to be formal, there is a zig-zag of quasi-isomorphisms
$\theta(X)\sim \theta(\KU_\QQ^G)$
in $\CommAG_\QQ$.  The zigzag of Quillen equivalences
\[ \Comm_\naive \GSp \simeq_Q \CommAG_\QQ\]
implies that $X$ and $\KU_\QQ^G$ are thus weakly equivalent in $\Comm_\naive\GSp$.  The proof for $\ku_\QQ^G$---or indeed, for any spectrum whose image we know to be formal in $\CommAG_\QQ$ ---is the same.
\end{proof}

\bibliography{wit3references}
\bibliographystyle{plain}

\end{document}